\def\M{{\mathcal{M}}}
\def\oM{\overline{\mathcal{M}}}
\def\Om{{\Omega}}
\def\oOm{\overline{\Omega}}
\def\RR{\mathbb{R}}
\def\NN{\mathbb{N}}
\def\QQ{\mathbb{Q}}
\def\PP{\mathbb{P}}
\def\max{{\rm max}}
\def\sskip{\vspace{4pt}}
\def\oGamma{{\overline{\Gamma}}}
\theoremstyle{definition}
\newtheorem{definition}{Definition}[section]
\theoremstyle{plain}
\newtheorem{conjecture}[definition]{Conjecture}
\newtheorem{theorem}[definition]{Theorem}
\newtheorem{claim}[definition]{Claim}
\newtheorem{proposition}[definition]{Proposition}
\newtheorem{lemma}[definition]{Lemma}
\begin{document}

\baselineskip=17pt

\title{A FLAT PERSPECTIVE ON MODULI SPACES OF HYPERBOLIC SURFACES}
\date{\today}
\author{Adrien Sauvaget}
\address{CNRS, Universit\'e de Cergy-Pontoise, Laboratoire AGM, UMR 8088, 2 av. Adolphe Chauvin 95302 Cergy-Pontoise Cedex, France}
\email{adrien.sauvaget@math.cnrs.fr}

\maketitle

\begin{abstract} Volumes of moduli spaces of hyperbolic cone surfaces were previously defined and computed when the angles of the cone singularities are at most $2\pi$.  We propose a general definition of these volumes  without restriction on the angles. 
%Furthermore, we prove that these volumes can be explicitly computed and depend piecewise analytically on the angles of the cone singularities.  
This construction is based on flat geometry as our proposed volume is a limit of Masur--Veech volumes of moduli spaces of multi-differentials.  This idea generalizes the observation in quantum gravity  that the Jackiw--Teitelboim partition function is a limit of minimal string partition functions from Liouville gravity. Finally, we use the properties of these volumes to recover Mirzakhani's recursion formula for Weil--Petersson polynomials. This provides a new proof of Witten--Kontsevich's theorem.

%We show that this function is piecewise analytic in the angles of the conical singularities and can be explicitly computed. 

%In a second part of the paper, we use the properties of these volumes to study the cohomology of the moduli spaces of curves. We construct a new family of tautological relations, and we recover Mirzakhani's recursion formula for Weil-Petersson polynomials. The later result provides a new proof of Witten-Kontsevich's theorem.
 \end{abstract}

\setcounter{tocdepth}{1}
\tableofcontents

%%%%%%%%%%%%%%%%%%%%%%%%%%%%%%%%%%
%%%%%%%%%%%%%%%%%%%%%%%%%%%%%%%%%%
%%%%%%%%%%%%%%%%%%%%%%%%%%%%%%%%%%
\section{Introduction}
%%%%%%%%%%%%%%%%%%%%%%%%%%%%%%%%%%
%%%%%%%%%%%%%%%%%%%%%%%%%%%%%%%%%%
%%%%%%%%%%%%%%%%%%%%%%%%%%%%%%%%%%

%\noindent Notation: $|\star |$ stands for the size (sum of coordinates) if $\star$ is a vector, or the cardinal if it is a set.

\subsection{Hyperbolic cone surfaces} Let $(g,n)\in \NN^2$ be a pair satisfying  $2g-2+n>0$. We set 
\begin{equation}
\Delta_{g,n}\coloneqq \left\{ a=(a_1,\ldots,a_n)\in  \RR_{\geq 0}^n, \text{ such that }\,  |a|\coloneqq \sum_{i=1}^n a_i <2g-2+n \right\}.
\end{equation}
Let $a$ be a vector in $\Delta_{g,n}$. We denote by $\M_{g,n}(a)$ the moduli space of hyperbolic surfaces of genus $g$ with $n$ ordered cone singularities with angles $2\pi a_1,\ldots,2\pi a_n$ ({\em surfaces of type $a$} for short in the text), with the convention that $a_i=0$ stands for a cusp singularity. It is a smooth orbifold that carries a canonical symplectic form $\omega_{g,n}(a)$, the {\em Weil--Petersson} form. We define the Weil--Petersson volume of this space as
\begin{equation}
   V^{WP}_{g,n}(a)\coloneqq \frac{1}{(3g-3+n)!} \int_{\M_{g,n}(a)}  \left(\frac{\omega_{g,n}(a)}{4\pi^2}\right)^{3g-3+n}. 
\end{equation}
For all $a$, the space $\M_{g,n}(a)$ is real isomorphic to $\M_{g,n}$,  the moduli space of smooth curves of genus $g$ with $n$ marked points~\cite{Tro1}. However, the symplectic geometry of $\oM_{g,n}(a)$ depends on $a$, and this dependence is expected to be regular in chambers of $\Delta_{g,n}$ delimited by affine walls. We first recall previous results obtained for small angles.

%For instance, the function $V^{WP}_{g,n}$ should be a piece-wise polynomial as was proved for small.

\subsubsection*{Angles smaller than $\pi$} If we assume that the coordinates of $a$ are smaller than $1/2$, then the space $\M_{g,n}(a)$ shares common features with the moduli spaces of hyperbolic surfaces with geodesic boundaries described by Mirzkhani in~\cite{Mir1,Mir}. For instance, $\M_{g,n}(a)$ carries canonical systems of Darboux coordinates, the {\em Frenchel--Nielsen} coordinates. The Weil--Petersson form extends to a kähler form on $\oM_{g,n}$, the compactification of $\M_{g,n}$ by stable curves.  
The cohomology class of this form is  given by
\begin{equation}
    \frac{[\omega_{g,n}(a)]}{2\pi^2}= \kappa_1 - \sum_{i=1}^n a_1^2\psi_i,
\end{equation}
where $\psi_i$ is the Chern class of the co-tangent line at the $i$-th marked point and $\kappa_1$ is the Mumford-Morita class~\cite{Mir,TanWonZha,DoNor,AnaNor}. In particular, the Weil--Petersson volume is a rational polynomial defined by integrals of tautological classes 
\begin{equation}
    (-1)^{g-1+n} V^{WP}_{g,n}(a)=  P_{g,n}(a) \coloneqq \frac{1}{(3g-3+n)!} \int_{\oM_{g,n}} \left(-\frac{1}{2}\kappa_1 + \sum_{i=1}^n \frac{a_1^2}{2}\psi_i\right)^{3g-3+n}.
\end{equation}
The polynomials $P_{g,n}$ are often called {\em Mirzakhani polynomials}\footnote{Our convention for Mirzakhani polynomials differs from the classical one by a sign and a factor $(2\pi)^{6g-6+2n}$.}.

\subsubsection*{Angles smaller than $2\pi$} Let $a$ be a vector in $[0,1[^n$. Anagnostou--Mullane--Norbury recently proved that the Weil--Petersson symplectic form extends  to a kähler form if we replace $\oM_{g,n}$ by $\oM_{g,n}(a)$, Hasset's compactification of $\M_{g,n}$ by $a$-stable curves~\cite{AnaMulNor,Has}.  The spaces $\oM_{g,n}(a)$ are birational models of $\oM_{g,n}$ that are constant in chambers of $[0,1[^n$, and $\oM_{g,n}$ is the model associated with the chamber defined by $a_i+a_j<1$ for all pairs $(i,j)$.  As a consequence, the Weil-Petersson volume is a piecewise polynomial function in $a$, and they described the wall-crossing formulas explicitly. Finally, they observed that this volume tends to 0 as $a$ goes to $1$. This is the last of a long series of works that computed Weil--Petersson volumes for $a$ in several sub-domains of $]0,1[^n$ through different approaches~\cite{DoNor,SchTra,MaxTur}.

\sskip

\noindent {\em What about general angle data?} The primary motivation for the present work is the following problem:
{\em  How to define  the volume of the space $\M_{g,n}(a)$ for a general value of $a$ in $\Delta_{g,n}$? Can we compute this volume function?} We propose indirect but explicit solutions to this problem based on the geometry of moduli spaces of differentials. In the course of our construction, we will also propose explicit conjectural expression of the cohomology class of the Weil-Petersson symplectic form.

 \subsection{Moduli spaces of differentials} Let $(a,k)$ be a pair of $\Delta_{g,n}\times \NN^*$. We denote by $\Omega_{g,n}(a,k)$, 
 the {\em  moduli space of $k$-differentials of type $a$}, i.e. the moduli space of tuples $(C, x_1,\ldots, x_n, \eta)$, where: \begin{itemize}
     \item $(C,x_1,\ldots,x_n)$ is a smooth curve of genus $g$ with $n$ markings,
     \item $\eta$ is a meromorphic  $k$-differential on $C$ with singularities (zeros or poles) of order at least $ka_i-k$ at $x_i$  for all $i\in \{1,\ldots,n\}$,  and no poles outside the markings. 
 \end{itemize} 

 \sskip

 For $k$ large enough, the space $\Omega_{g,n}(a,k)$ is a vector bundle over $\M_{g,n}$. Let $r$ be the rank of this vector bundle and  $p\colon\PP\Om_{g,n}(a,k)\to \M_{g,n}$ its projectivization.  
 The line bundle $\mathcal{O}(-1)\to \PP\Om_{g,n}(a,k)$ carries a natural hermitian metric $h_{a,k}$, the {\em area metric}, defined as follows: a $k$-differential determines a flat metric with cone singularities on the underlying surface, and the value of $h_{a,k}$ is the $k$-th power of the area for this metric.  We denote by ${\alpha}_{g,n}(a,k)$ the curvature form associated with the dual of the area metric. %We use this form to define currents on $\oM_{g,n}$.
 \begin{definition}
 Let $d\geq 0$, and let $\eta$ be a $C^\infty$- form of co-degree $2d$ in $\oM_{g,n}$. We set
\begin{equation}
     \omega_{g,n,d}(a)(\eta)\coloneqq \limsup_{k\to \infty}  \frac{1}{(-k^2)^{d}}\int_{\Omega_{g,n}(a,k)} \alpha_{g,n}(a,k)^{r-1+d}\wedge p^*\left(\eta|_{\M_{g,n}}\right).
\end{equation}
(the convergence of the integrals in the RHS was proved in~\cite{CosMoeZac}).
 \end{definition}
% but it is not clear that the limit in $k$ determines a current. 
\begin{conjecture}\label{conj}
    For all $d$, 
    $\omega_{g,n,d}(a)$ is the current defined by integration of $\frac{1}{d!}\left(\frac{\omega_{g,n}(a)}{4\pi^2}\right)^d$.
\end{conjecture}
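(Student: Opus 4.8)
\medskip

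\noindent\emph{A strategy toward Conjecture \ref{conj}.} I would split the statement into three layers: a \emph{multiplicativity} reduction to the case $d=1$; a \emph{cohomological} computation producing a candidate for the limit; and --- the real difficulty --- an \emph{analytic} identification of that candidate with the Weil--Petersson form of the honest hyperbolic structure of type $a$.

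\medskip

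\noindent\emph{Layer 1 (reduction to $d=1$).} The form $\alpha_{g,n}(a,k)$ is a closed semipositive $(1,1)$-form on the smooth locus --- it is the curvature of the dual area metric --- and by \cite{CosMoeZac} the masses $\int_{\Omega_{g,n}(a,k)}\alpha_{g,n}(a,k)^{r-1+d}\wedge p^*(\eta)$ are bounded uniformly in $k$. The plan is to first show that $\tfrac1{-k^2}\,p_*\big(\alpha_{g,n}(a,k)^{\,r}\big)$ converges, as a positive closed current on $\oM_{g,n}$ with mild (analytic-type) singularities along the boundary, to a current $T_a$; continuity of wedge products along such regularizing families then gives $\omega_{g,n,d}(a)=\tfrac1{d!}\,T_a^{\wedge d}$, so that the conjecture reduces to the single equality $T_a=\tfrac1{4\pi^2}\,\omega_{g,n}(a)$ of $(1,1)$-currents.

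\medskip

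\noindent\emph{Layer 2 (the candidate class).} For $k\gg0$ one has $\Omega_{g,n}(a,k)=\pi_*\big(K_\pi^{\otimes k}\big(\textstyle\sum_i\lceil k(1-a_i)\rceil\,x_i\big)\big)$ over $\M_{g,n}$, with $\pi$ the universal curve, and $\alpha_{g,n}(a,k)$ represents $c_1(\mathcal O_{\PP\Omega}(1))$; hence $p_*\big(\alpha_{g,n}(a,k)^{r-1+d}\big)$ is, in cohomology, the Segre class $s_d\big(\Omega_{g,n}(a,k)^\vee\big)$, which Grothendieck--Riemann--Roch expands in powers of $k$ from $\mathrm{ch}\,\Omega_{g,n}(a,k)=\pi_*\big(e^{\,k\psi_\pi+\sum_i\lceil k(1-a_i)\rceil\sigma_i}\,\mathrm{Td}(K_\pi^\vee)\big)$. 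With $\sigma_i\sigma_j=0$ for $i\ne j$, $\psi_\pi\sigma_i=\psi_i\sigma_i$ and $\pi_*\psi_\pi^{m+1}=\kappa_m$, the top power of $k$ in the degree-$2m$ part is $\tfrac{k^{m+1}}{(m+1)!}\big(\kappa_m+\sum_i(1-a_i^{m+1})\psi_i^m\big)$; once the Todd corrections and the boundary-divisor contributions of a suitable (incidence-variety type) compactification of $\PP\Omega_{g,n}(a,k)$ are bookkept, this becomes a leading term $(-k^2)^d\,\tfrac1{d!}\,\Theta_{g,n}(a)^d$ for an explicit class $\Theta_{g,n}(a)\in H^2(\oM_{g,n})$, piecewise-polynomial in $a$, that restricts to $\tfrac12\big(\kappa_1-\sum_i a_i^2\psi_i\big)$ on the chamber $\{a_i+a_j<1\}$. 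This layer I expect to be routine but technically involved; it is what yields the conjectural formula for $[\omega_{g,n}(a)]$ announced in the introduction, and the jumps of the $\lceil k(1-a_i)\rceil$ and of the strata of $\Omega_{g,n}(a,k)$ are what generate the walls --- and, en route, the recursion satisfied by the flat volumes and their agreement with Mirzakhani's polynomials for small angles.

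\medskip

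\noindent\emph{Layer 3 (identification with Weil--Petersson --- the main obstacle).} It remains to prove $T_a=\tfrac1{4\pi^2}\,\omega_{g,n}(a)$, i.e. that $\Theta_{g,n}(a)$ is actually represented by the Weil--Petersson form of the hyperbolic surface of type $a$. The natural route is a semiclassical, flat-to-hyperbolic limit, mirroring the Liouville-to-Jackiw--Teitelboim heuristic: as $k\to\infty$ the fibrewise measure $\alpha_{g,n}(a,k)^{\,r}$ should concentrate, by a saddle-point analysis, on the area-minimizing ($\simeq$ Hubbard--Masur type) $k$-differential, the concentration exponent being $k$ times the log-area potential; rescaled to unit area, that extremal flat metric $|\eta|^{2/k}$ should converge to the hyperbolic metric $\rho_a$ of type $a$, and $-\tfrac1k\log\mathrm{Area}$ to the Liouville functional whose $\partial\bar\partial$ over $\M_{g,n}$ is $\tfrac1{4\pi^2}\,\omega_{g,n}(a)$. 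For $|a|$ small this comparison is within reach of a priori estimates for the associated Liouville equation and recovers the known class $\kappa_1-\sum_i a_i^2\psi_i$, but for general $a$ it is genuinely out of reach: as $a$ crosses a wall the zeros of the extremal $k$-differentials collide or are absorbed into the nodes, the relevant strata of $\Omega_{g,n}(a,k)$ jump, and neither the convergence of the flat metrics nor the identification of the limiting Kähler potential with the hyperbolic one is available. Controlling this degeneration uniformly over all chambers of $\Delta_{g,n}$ is exactly the open problem of describing the Weil--Petersson geometry of $\oM_{g,n}(a)$ for arbitrary $a$; so I expect Layers 1 and 2 to go through unconditionally, while Layer 3 --- and hence the full statement --- must await that problem, which is why it is presented as a conjecture.
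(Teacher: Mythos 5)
The statement you are addressing is labelled as a \emph{conjecture}, and the paper contains no proof of it: the author only records that the $n=0$ case for compactly supported forms follows from Ma--Zhang \cite{MaZha}, identifies the obstacles for $n>0$ (the continuous spectrum of the Laplacian on cone surfaces, and the extension of the currents over the boundary of a compactification $\oM_{g,n}(a)$ that has not yet been constructed), and then proves only the \emph{cohomological} counterpart (Claim~\ref{claim}) on the restricted domain $\Delta_{g,n}^{\leq 2}$. Your proposal is likewise a roadmap rather than a proof, and you are candid about that, so there is no question of your having missed the paper's argument --- there is none to miss. Your Layer~2 is in fact a reasonable sketch of what the paper \emph{does} establish: the Grothendieck--Riemann--Roch computation of the Segre classes for $a\in\Delta_{g,n}^{\leq 1/2}$ giving the leading term $\frac{k^{2d}}{d!}\left(\frac12\kappa_1-\sum_i\frac{a_i^2}{2}\psi_i\right)^d$, and the wall-crossing generated by boundary strata of the incidence-variety compactification, organized there through rational hyperbolic graphs (Theorem~\ref{th:main2} and Sections~\ref{sec:taut}--\ref{sec:growth}).

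Two caveats on the parts you label ``unconditional.'' First, Layer~1 is not routine: wedge products of closed positive currents are not continuous operations in general, and the Bedford--Taylor-type hypotheses (locally bounded or at worst analytically singular potentials near the boundary strata) that would justify $\omega_{g,n,d}(a)=\frac1{d!}T_a^{\wedge d}$ are precisely part of what is unknown here --- the paper deliberately states the conjecture for each $d$ separately and only invokes \cite{CosMoeZac} for convergence of the individual integrals. The semipositivity of $\alpha_{g,n}(a,k)$ also deserves care given the orientation subtleties the paper flags (the footnote to Theorem~\ref{th:volgn}). Second, your Layer~3 mechanism (saddle-point concentration on an extremal flat metric and convergence of $|\eta|^{2/k}$ to $\rho_a$) is a different, and more heuristic, route than the one the paper points to for the known case, namely the family Bergman kernel / superconnection analysis of Ma--Zhang; if one were to pursue the conjecture, extending that analytic machinery to cone surfaces (continuous spectrum) and to degenerating families is the concrete technical program, whereas the concentration picture you describe is closer to the physics heuristic the paper cites as motivation. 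None of this is a defect in your submission as a response to a conjecture, but Layers~1 and~2 should not be advertised as going through unconditionally: Layer~2 is proved in the paper only in cohomology and only on $\Delta_{g,n}^{\leq 2}$, and Layer~1 is an open analytic assertion in its own right.
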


 For $n=0$, the restriction of this conjecture to forms with compact support in $\M_{g}$ follows from the work of Ma--Zhang~\cite{MaZha}. The first technical difficulty in extending their arguments to $n>0$ is the presence of a continuous part in the spectrum of the Laplacian of cone surfaces. However, the most delicate part of this conjecture is to extend these results to differential forms on families of singular curves.  In particular, we formulated this conjecture on $\oM_{g,n}$, although we expect that the Weil--Petersson form should extend naturally (with singularities) to an alternative compactification $\oM_{g,n}(a)$ of $\M_{g,n}$ that would generalize Hasset's moduli spaces of $a$-stable curves. The construction of this space for a general vector $a$ is an open problem.%\footnote{We may further ask whether this space could be algebraic and birational to $\oM_{g,n}$ in general.} %but 

\sskip

Here, we consider only the cohomological counterpart of Conjecture~\ref{conj}. Namely we define $s_{g,n,d}(a,k)$ to be the cohomology class in $H^*(\oM_{g,n},\QQ)$ of the current defined by  integration of $\alpha_{g,n}(a,k)^{r-1+d}$. 
\begin{claim}\label{claim}
For all $d$, the functions $k^{-2d}s_{g,n,d}(\cdot,k)\colon \Delta_{g,n}\to H^{2d}(\oM_{g,n},\QQ)$ converge uniformly towards a function $s_{g,n,d}$ as $k$ goes to $\infty$. The function $s_{g,n,d}$ is a continuous piece-wise polynomial in $a$ of degree $2d$ with coefficients in the tautological ring of $\oM_{g,n}$  that can be explicitly computed. Moreover for all $a\in ]0,1[^n$, we have 
\begin{equation}
    {\sum_{d\geq 1} s_{g,n,d}(a)}%}
={\rm exp}\left(-\frac{[\omega_{g,n}(a)]}{4\pi^2}\right) \text{ in $H^*(\oM_{g,n},\QQ)$.}
\end{equation}
% for all $a \in ]0,1[^n$. 
\end{claim}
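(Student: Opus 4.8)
The plan is to reinterpret $s_{g,n,d}(a,k)$ as an intersection number on (a compactification of) the moduli space of $k$-differentials, to evaluate it by Grothendieck--Riemann--Roch, and then to analyse the $k\to\infty$ asymptotics.

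First, since $\alpha_{g,n}(a,k)$ is the curvature of a Hermitian metric on $\mathcal{O}(1)\to\PP\Om_{g,n}(a,k)$, its cohomology class is the relative hyperplane class $\xi:=c_1(\mathcal{O}(1))$; and because the area metric is nonnegatively curved with only mild (bounded, analytic-type) singularities along the loci where $\eta$ acquires an extra zero at a marked point, the closed currents $\alpha^{r-1+d}$ occurring in the definition still represent the cohomology classes $\xi^{r-1+d}$. Over $\M_{g,n}$ this identifies $s_{g,n,d}(a,k)$ with the Segre class $s_d$ of the vector bundle $\Om_{g,n}(a,k)=\pi_*\big(\omega_\pi^{\otimes k}(\textstyle\sum_i m_i\sigma_i)\big)$, where $m_i=\lfloor k(1-a_i)\rfloor$. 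The substantive point is to pin down the class in $H^{2d}(\oM_{g,n},\QQ)$, i.e.\ its correct extension across the boundary, where $\Om_{g,n}(a,k)$ ceases to be locally free and must be replaced by a model adapted to the chamber of $a$ (a Hassett-type compactification, as in the work of Anagnostou--Mullane--Norbury for $a\in\,]0,1[^n$, and conjecturally a further generalisation in general). Concretely one works on a compactification $\PP\overline{\Om}_{g,n}(a,k)$ with a proper map $\bar p$ to $\oM_{g,n}$ and an extension $\bar\xi$ of $\xi$ whose metric still has mild singularities, reduces to $\bar p_*(\bar\xi^{\,r-1+d})$, and uses the standard expression of $\bar\xi$ as a combination of boundary divisors and pullbacks of tautological classes. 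This is where the piecewise-polynomial dependence on $a$ originates, and I expect it to be the main obstacle.

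Granting this reduction, the next step is a Mumford-type Grothendieck--Riemann--Roch computation, $\mathrm{ch}\big(\Om_{g,n}(a,k)\big)=\pi_*\big(e^{k\psi'+\sum_i m_i\sigma_i}\,\mathrm{td}(\omega_\pi^{\vee})\big)$ plus the standard node corrections, and likewise on the boundary model. Expanding and pushing forward writes every $\mathrm{ch}_\ell$, hence every Chern and Segre class, as an explicit polynomial in $k$, in the floors $m_i$, and in $\psi_i,\kappa_j$ and boundary classes. One checks that $\mathrm{ch}_\ell$ is of order $k^{\ell+1}$ while $\mathrm{ch}_1$ is of order $k^2$, so the higher Chern classes are negligible next to powers of $\mathrm{ch}_1$; consequently the rescaled total Segre class $\sum_d k^{-2d}s_{g,n,d}(a,k)$ converges to the exponential of a single class of cohomological degree $2$.

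Finally, the limit. Since $m_i/k\to 1-a_i$ and all subleading contributions are of order $k^{\ell+1}$ uniformly for $a$ in compact subsets of a fixed chamber, the rescaled classes converge uniformly to $s_{g,n,d}(a)=\tfrac1{d!}\,s_{g,n,1}(a)^d$, polynomial on each chamber of $\Delta_{g,n}$ cut out by the affine walls, and continuity across walls is verified by comparing one-sided limits in the explicit formula. It remains to evaluate $s_{g,n,1}(a)$ for $a\in\,]0,1[^n$: carrying out the degree-$2$ part of the computation and comparing with the formula $[\omega_{g,n}(a)]/2\pi^2=\kappa_1-\sum_i a_i^2\psi_i$ of Mirzakhani and Anagnostou--Mullane--Norbury (with the Arbarello--Cornalba normalisation of $\kappa_1$, and with the chamber-dependent boundary corrections produced in the first step) yields $s_{g,n,1}(a)=-[\omega_{g,n}(a)]/4\pi^2$, whence $\sum_d s_{g,n,d}(a)=\exp\big(-[\omega_{g,n}(a)]/4\pi^2\big)$. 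As a consistency check, the top-degree part integrated over $\oM_{g,n}$ returns the Mirzakhani polynomial $P_{g,n}(a)$.
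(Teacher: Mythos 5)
Your opening moves---identifying $s_{g,n,d}(a,k)$ with $p_*\xi^{r-1+d}$, i.e.\ with the Segre class of the cone of $k$-differentials (citing the comparison of the singular area metric with the smooth class, as in \cite{CosMoeZac}), and then running Grothendieck--Riemann--Roch to see that ${\rm ch}_\ell$ grows like $k^{\ell+1}$ while ${\rm ch}_1$ grows like $k^2$, so that only $c_1$ survives the rescaling---are exactly what the paper does, but only on the subdomain $\Delta_{g,n}^{\leq 1/2}$ where the incidence variety compactification really is a vector bundle over $\oM_{g,n}$ and Bini's formula applies; there one indeed gets $k^{-2d}s_{g,n,d}\to e_{g,n,d}(a)$, the degree-$d$ part of $\exp(-\tfrac12\kappa_1+\sum_i\tfrac{a_i^2}{2}\psi_i)$. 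The genuine gap is everything after that. You correctly locate the difficulty (``the correct extension across the boundary \dots\ I expect it to be the main obstacle'') but then resolve it by fiat, invoking a locally free model on a ``Hassett-type compactification \dots\ conjecturally a further generalisation in general'' together with unspecified ``chamber-dependent boundary corrections produced in the first step.'' The paper explicitly states that constructing such a compactification for general $a$ is an open problem, and it does \emph{not} prove the Claim on all of $\Delta_{g,n}$: only the restriction to $\Delta_{g,n}^{\leq 2}$ is established (Theorem~\ref{th:main1}), the general case being deferred to a subsequent paper. Your outline therefore asserts more than it can deliver while omitting the one step that carries the actual content.

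The wall-crossing mechanism the paper uses is entirely absent from your proposal, and it is not a GRR computation on a better compactification. One stays inside the ambient bundle $\oOm_{g,n}^{k,P}$, adds $N(a,k)=k(2g-2+n-|a|)$ simple zeros so that the augmented type $\overline a^k$ is of total weight $2g-2+n$, applies the divisor-class relation of \cite{SauFlat} on $\PP\oOm_{g,n+N(a,k)}(\overline a^k,k)$, and pushes forward along the forgetful map; Lemma~\ref{lem:forget} is the combinatorial heart, showing that only rational hyperbolic graphs survive the pushforward, which yields Proposition~\ref{pr:main}: a finite-difference relation between the classes at $a_n$ and at $a_n+1/k$ with correction terms indexed by ${\rm Rat}_{g,n}$. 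Summing over $ka_n$ steps and letting $k\to\infty$ (Lemma~\ref{lem:growth}) converts this into the differential equation~\eqref{formain:der}, which is then solved by induction on $(g,n,d)$. Relatedly, your intermediate assertion that the rescaled total Segre class ``converges to the exponential of a single class of cohomological degree $2$'' is only true for $a\in\,]0,1[^n$: past the first wall the limit acquires the boundary pushforward terms $D^3_{g,n}$ and $D^{4,\ell}_{g,n}$ and is no longer an exponential, so a dominance-of-$c_1$ argument cannot even predict the shape of the answer on the rest of $\Delta_{g,n}$.
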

We will prove this claim in full generality in a subsequent paper. Here, we restrict our attention to smaller domains of angle data where the analysis is simplified. For all $x> 0$ we denote by $\Delta_{g,n}^{\leq x}\subset \Delta_{g,n}$
the set of vectors of $\Delta_{g,n}$ such that $a_{n}\leq x$ and $a_i<1/2$ for all $i\in \{1,\ldots,n-1\}$. 
\begin{theorem}\label{th:main1}
    The restriction of  Claim~\ref{claim} to $a\in \Delta_{g,n}^{\leq 2}$ is valid. 
\end{theorem}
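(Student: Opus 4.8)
The plan is to reduce the statement to an intersection‑theoretic computation on a compactification of the projectivized moduli of $k$-differentials, and then to study the $k\to\infty$ asymptotics. Write $E=E_{g,n}(a,k)$ for the coherent sheaf $\pi_*\bigl(\omega_\pi^{\otimes k}(\sum_i\lfloor k-ka_i\rfloor\sigma_i)\bigr)$ on $\oM_{g,n}$, where $\pi\colon\overline{\mathcal{C}}_{g,n}\to\oM_{g,n}$ is the universal curve with its sections $\sigma_i$; for $k$ large its restriction to $\M_{g,n}$ is the vector bundle $\Om_{g,n}(a,k)$. Let $\bar p\colon\overline{\PP\Om}_{g,n}(a,k)\to\oM_{g,n}$ denote the closure of $\PP\Om_{g,n}(a,k)$ in a suitable compactification over $\oM_{g,n}$ for which the tautological intersection theory and the metric analysis are available (e.g. the one obtained from the projectivized multi-scale, or incidence‑variety, compactification of $k$-differentials of type $a$), and $\bar\xi=c_1(\mathcal{O}(1))$ its tautological class.

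The first step is to identify $s_{g,n,d}(\cdot,k)$ on $\oM_{g,n}$. One should show that for $a\in\Delta_{g,n}^{\leq 2}$ the area metric $h_{a,k}$ extends to $\mathcal{O}(-1)$ over $\overline{\PP\Om}_{g,n}(a,k)$ with only mild (Poincar\'e‑type, controlled by the thin‑part area growth near degenerating cylinders) singularities along the boundary, so that $\alpha_{g,n}(a,k)$ represents $\bar\xi$, the products $\alpha_{g,n}(a,k)^{j}$ represent $\bar\xi^{j}$, and hence the pushforward to $\oM_{g,n}$ of the current $\alpha_{g,n}(a,k)^{r-1+d}$ represents $\bar p_*(\bar\xi^{r-1+d})$. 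This should follow by extending to $n>0$, and to $a_n$ up to $2$, the local flat‑geometry analysis of \cite{CosMoeZac}; here the inequalities $a_i<1/2$ for $i<n$ keep the markings $x_1,\dots,x_{n-1}$ in the regime where no new boundary behaviour arises and $\overline{\PP\Om}$ genuinely lies over $\oM_{g,n}$, while $a_n\le 2$ bounds the order of the zero/pole at $x_n$ so that the relevant boundary strata remain of bounded depth. Granting this, $s_{g,n,d}(\cdot,k)=\bar p_*(\bar\xi^{r-1+d})$, a tautological class on $\oM_{g,n}$, equal over the interior to the Segre class $s_d(E)$.

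The second step computes this class and extracts its $k$-asymptotics. Using $\bar p_*\bar\xi^{r-1}=1$ and $\bar p_*\bar\xi^{j}=0$ for $j<r-1$, together with the tautological relations on $\overline{\PP\Om}_{g,n}(a,k)$ expressing $\bar\xi$ through $\psi$-classes and boundary divisors, one writes $\bar p_*(\bar\xi^{r-1+d})$ as a finite sum, indexed by boundary level graphs of $\oM_{g,n}$, of $\psi/\kappa/\delta$-monomials with coefficients polynomial in $a$ and $k$. The normalisation $k^{-2d}$ is dictated by Grothendieck--Riemann--Roch: since $c_1\bigl(\omega_\pi^{\otimes k}(\sum_i\lfloor k-ka_i\rfloor\sigma_i)\bigr)$ has leading part of order $k$, one gets $c_1(E)=k^2\tfrac{[\omega_{g,n}(a)]}{4\pi^2}+O(k)$ via the identity $[\omega_{g,n}(a)]/2\pi^2=\kappa_1-\sum_i a_i^2\psi_i$, and more generally $s_d(E)$ and each boundary contribution grow like $k^{2d}$, with interior leading term $\tfrac{(-1)^d}{d!}\bigl(k^2\tfrac{[\omega_{g,n}(a)]}{4\pi^2}\bigr)^d$. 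Dividing by $k^{2d}$ and letting $k\to\infty$, the level‑graph sum converges coefficient by coefficient and uniformly for $a\in\Delta_{g,n}^{\leq 2}$, the only non‑smooth dependence on $a$ coming from the floors $\lfloor k-ka_i\rfloor$ and from the chamberwise shape of the boundary coefficients, both of which stabilise into continuous piecewise‑polynomial functions of degree $2d$; this yields $s_{g,n,d}$ with the asserted regularity. For $a\in\,]0,1[^n$ the degree‑$2$ part of the limit is $-[\omega_{g,n}(a)]/4\pi^2$ by the Grothendieck--Riemann--Roch computation and the known formula for the Weil--Petersson class on $\oM_{g,n}$ valid there \cite{AnaMulNor}, and the exponential shape in higher degree follows from multiplicativity of the Segre generating series on the interior --- which forces $\sum_d k^{-2d}s_d(E)\to\exp(-[\omega_{g,n}(a)]/4\pi^2)$ --- together with continuity to propagate this to the boundary corrections; alternatively one may use the already‑known compactly supported case of Conjecture~\ref{conj} on $\M_{g,n}$ via \cite{MaZha}.

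The main obstacle is the uniform‑in‑$k$ bookkeeping of the second step: proving that the boundary contributions to $\bar p_*(\bar\xi^{r-1+d})$ grow no faster than $k^{2d}$, that their $k^{-2d}$-normalised limits are exactly the continuous piecewise‑polynomial tautological classes predicted, and that $\Delta_{g,n}^{\leq 2}$ is precisely the locus on which $\overline{\PP\Om}_{g,n}(a,k)$ stays ``shallow'' enough over $\oM_{g,n}$ for these estimates --- in particular that no boundary stratum with $x_n$ on a deep level interferes. The analytic input of the first step (goodness of the area metric along the boundary) is delicate for $n>0$, but over $\Delta_{g,n}^{\leq 2}$ it should remain close to the situation treated in \cite{CosMoeZac}.
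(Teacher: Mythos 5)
Your step 1 (identifying $s_{g,n,d}(\cdot,k)$ with $\bar p_*(\bar\xi^{\,r-1+d})$ on the incidence variety compactification) matches the paper, which simply quotes \cite{CosMoeZac} for this. The gap is in your step 2. For $|a|<2g-2+n$ the space $\oOm_{g,n}(a,k)$ is a cone of positive rank over $\oM_{g,n}$, and there is no off-the-shelf tautological relation expressing $\bar\xi$ through $\psi$-classes and boundary divisors on it; the relations of \cite{Sau,SauFlat} are only available for strata of $k$-differentials whose type is fully prescribed, i.e.\ $|a|=2g-2+n$. The paper's central device, which your proposal does not contain, is to pass to the vector $\overline{a}^k$ obtained by appending $N(a,k)=k(2g-2+n-|a|)$ extra markings of weight $1+1/k$ (simple zeros), apply \cite[Theorem 2.7]{SauFlat} there, and push forward along the forgetful map $\pi(a,k)$. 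The combinatorial heart of the argument is then Lemma~\ref{lem:forget}, which shows that after this pushforward only rational bi-colored graphs (a single genus-$0$ lower-level vertex, no abelian vertices) survive; this is exactly where the hypotheses $a_i<1/2$ for $i<n$ and $a_n\le 2$ enter, not merely to keep boundary strata ``shallow''.

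Moreover, the resulting identity (Proposition~\ref{pr:main}) is not a closed-form evaluation of $\bar p_*(\bar\xi^{\,r-1+d})$ but a difference equation in $a_n$ with step $1/k$: it compares the class at $a$ with the class at $a[a_n+1/k]$ plus boundary corrections. The paper telescopes this from $a_n=0$ (where $a[0]\in\Delta^{\le 1/2}_{g,n}$ and the Grothendieck--Riemann--Roch computation applies, as in your proposal) up to $a_n\le 2$, and shows by induction on $(g,n,d)$ that the Riemann sums converge to the integral recursion \eqref{formain:der}. Your proposal replaces this inductive structure by the assertion that a boundary level-graph sum ``converges coefficient by coefficient'', but the coefficients of those boundary terms are precisely what the recursion is needed to compute; without the simple-zero/forgetful-map mechanism and the resulting differential equation in $a_n$ there is no way to identify them, nor to see why the limit is the specific continuous piecewise polynomial of Theorem~\ref{th:main2}. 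Likewise, ``multiplicativity of the Segre generating series'' does not by itself yield the exponential: in the paper this comes from Bini's formula for the Chern characters, which shows that $\mathrm{ch}_d$ for $d\ge 2$ is subleading in $k$ so that only $c_1^d/d!$ survives the normalisation, combined with the recursion for larger $a_n$.
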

The functions $s_{g,n,d}$ are computed by induction on $g$ and $n$ (see Theorem~\ref{th:main2}).
 The main interest of this theorem is to make $s_{g,n,d}(a)$ explicit and, therefore, provides a conjectural expression of powers of the Weil-Petersson symplectic form. If we restrict to the numerical counterpart, then we can consider the function on $\Delta_{g,n}^{\leq 2}$ defined by
\begin{equation}
    V_{g,n}(a)\coloneqq\int_{\oM_{g,n}} (-1)^{g-1+n}s_{g,n,3g-3+n}(a).
\end{equation}
By Theorem~\ref{th:main1}, this function is a piece-wise polynomial of degree $6g-6+n$ with rational coefficients, and we have $V_{g,n}(a)=V^{WP}_{g,n}(a)$ for all $a\in \Delta_{g,n}^{\leq 1}$. The intuition behind the construction of this function comes from the study of flat surfaces.  The function $V_{g,n}$ is an integral on moduli space of $k$-differentials with $n$ cone singularities of angles prescribed by $a$ and simple zeros, i.e., cone singularities of angles $\left(1+ \frac{1}{k}\right) 2\pi$~\cite{SauFlat}.  Heuristically,  the simple zeros form a small punctual negative curvature,  and as $k$ goes to infinity, these small singularities equidistribute to approximate a smooth metric of constant negative curvature. 

\sskip

This heuristic convergence has already been observed and used in theoretical physics. The Weil--Petersson measure is used to compute the partition functions in Jackiw--Teitelboim (JT) theory, a gravity theory with dilaton~\cite{WitJT}. These partition functions are limits of partition functions for $(2,k)$ minimal string models in  Liouville conformal field theory that can be expressed in terms of the Segre classes of moduli spaces of $k$-differentials~\cite{LiouvilleJT}. Then, wall-crossing formulas in JT theory with angles smaller than $2\pi$ are recovered from the study of the change of regime for instantons in the $(2,k)$-minimal models~\cite{MaxTur}. Theorem~\ref{th:main1} generalizes these results from a mathematical perspective.

\subsection{Isomonodromic foliations} If $a_n\neq 1$ or $2$, then we set
\begin{equation} 
{\rm Vol}_{g,n}(a)\coloneqq \frac{V_{g,n}(a)}{\prod_{i=1}^n \sin(a_i\pi)}.
\end{equation}
\begin{theorem}\label{th:volgn} If $2g-2+n\geq 2$, then
    the function ${\rm Vol}_{g,n}$ extends to a continuous non-negative function on $\Delta_{g,n}^{\leq 2}$. In particular, $V_{g,n}(a)=0$ if $a_n=1$ or $2$.\footnote{The fact that $V_{g,n}$ may be negative is due to the difference between the orientation defined by the complex structure (used implicitly for integration) and the one defined by the symplectic structures (Weil--Petersson or area form).}
\end{theorem}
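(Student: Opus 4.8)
The plan is to prove Theorem~\ref{th:volgn} by induction on the complexity $2g-2+n$, using the inductive computation of the classes $s_{g,n,d}$ from Theorem~\ref{th:main2}. By Theorem~\ref{th:main1} we already know that $V_{g,n}(a)=\int_{\oM_{g,n}}(-1)^{g-1+n}s_{g,n,3g-3+n}(a)$ is a continuous piecewise polynomial on $\Delta_{g,n}^{\leq 2}$, so the content of Theorem~\ref{th:volgn} is that this piecewise polynomial is divisible, in the ring of continuous functions, by $\prod_i\sin(\pi a_i)$ — which in particular forces it to vanish along $a_n=1$ and $a_n=2$ — and that the quotient is non-negative. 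The reason a factor $\sin(\pi a_n)$ must appear is geometric: $V_{g,n}(a)$ is a limit of Masur--Veech volumes of spaces of $k$-differentials having a cone point of angle $2\pi a_n$ at $x_n$, and in the period coordinates transverse to that singularity the monodromy of the flat structure is a rotation by $2\pi a_n$, so the local contribution of $x_n$ to the volume is proportional to $|1-e^{2\pi i a_n}|=2|\sin(\pi a_n)|$ — the $k$-differential incarnation of the elementary factors attached to zeros in Masur--Veech volume formulas. Consequently I expect the recursion of Theorem~\ref{th:main2}, specialized to top cohomological degree and integrated over $\oM_{g,n}$, to rewrite in terms of the normalized quantities ${\rm Vol}$ as
\begin{equation*}
{\rm Vol}_{g,n}(a)=\sum_{\Gamma}\int K_\Gamma(a_n;\mathbf{b})\,\Big(\prod_{v}{\rm Vol}_{g_v,n_v}\Big)(\mathbf{b},a_{\hat n})\,d\mathbf{b},
\end{equation*}
where $\Gamma$ runs over the stable topological types obtained by removing from the surface a pair of pants carrying $x_n$ (the non-separating term, with two new gluing parameters; the separating terms; and the terms gluing $x_n$ to an $x_j$ with $j<n$, with one new gluing parameter), where the terminal factors ${\rm Vol}_{0,3}$ and ${\rm Vol}_{1,1}$ are replaced by $V_{0,3}=1$ and $V_{1,1}$, and where each $K_\Gamma(a_n;\cdot)$ is an \emph{explicit, continuous, piecewise-polynomial, non-negative} kernel carrying no transcendental factor. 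The algebraic heart of the matter — the identity on which the whole scheme closes, and the one that reproduces Mirzakhani's recursion in the body of the paper — is that the $\sin(\pi\cdot)$ factors attached to the \emph{new} gluing parameters cancel exactly against the $\mathbf{b}$-dependence of the original kernels, by the flat analogue of Mirzakhani's integral identities.

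Granting this, the theorem follows by induction. For continuity on all of $\Delta_{g,n}^{\leq 2}$: the kernels $K_\Gamma(a_n;\cdot)$ depend continuously on $a_n$ across every wall of the chamber decomposition of $V_{g,n}$ — the walls, at $a_n\in\{\tfrac12\pm a_j,\,1,\,1+a_j,\,2,\dots\}$, are exactly the loci where the combinatorial type of the flat pair of pants through $x_n$ jumps while its area stays continuous — and the ${\rm Vol}$'s on the right are continuous by the inductive hypothesis; hence so are the (finite, or suitably dominated) integrals. For non-negativity: given $K_\Gamma\geq 0$ and non-negativity in the base cases, induction propagates it. The base cases are $(g,n)=(0,4)$ and $(1,2)$, where the recursion involves only the terminal factors $V_{0,3}=1$ and $V_{1,1}$, and where one checks directly from the Mirzakhani polynomials $P_{0,4},P_{1,2}$ — continued across the walls $a_i+a_n=1$, $a_n=1$ and over $a_n\in[1,2]$ via the wall-crossing formulas of Anagnostou--Mullane--Norbury on $[0,1[^n$ and the flat description beyond — that $V_{g,n}(a)/\prod_i\sin(\pi a_i)$ extends continuously and is $\geq 0$. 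Finally, since ${\rm Vol}_{g,n}$ is now a finite continuous function on $\Delta_{g,n}^{\leq 2}$, the relation $V_{g,n}(a)={\rm Vol}_{g,n}(a)\prod_{i=1}^n\sin(\pi a_i)$ gives $V_{g,n}(a)=0$ whenever $a_n\in\{1,2\}$; continuity near $a_i=0$ for $i<n$ is handled by the same argument with $x_i$ playing the role of $x_n$.

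The hard part will be the explicit identification and control of the kernels $K_\Gamma$, and there are two distinct difficulties. First, proving the cancellation of the new gluing-parameter $\sin$-factors requires extracting the leading ($k\to\infty$) asymptotics of the relevant Masur--Veech period integrals near the boundary nodes and matching them against the residues produced by the recursion of Theorem~\ref{th:main2} on the tautological side; this is the analytic core, and the point where ``$V_{g,n}$ is a limit of Masur--Veech volumes'' is genuinely used. Second, the non-negativity of $K_\Gamma$ for $a_n\in(1,2)$ is delicate: there the cone angle at $x_n$ exceeds $2\pi$, the flat pair of pants acquires an extra prong around $x_n$, its area changes sign, and these signs must be tracked against the twist $(-1)^{g-1+n}$ and against the mismatch between the complex and the symplectic orientation noted in the footnote — it is precisely this bookkeeping that makes ${\rm Vol}_{g,n}=V_{g,n}/\prod\sin(\pi a_i)$, and not $V_{g,n}$ itself, the non-negative object. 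A subsidiary point is the passage from the cohomological recursion of Theorem~\ref{th:main2}, valued in the tautological ring of $\oM_{g,n}$, to an honest recursion in the angle variables after taking top degree and integrating, which uses the structure of the boundary gluing maps together with string- and dilaton-type identities.
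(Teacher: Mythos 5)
Your proposal does not close: both of the facts that actually carry the theorem --- the vanishing of $V_{g,n}$ at integral $a_n$ together with the sign statement that $(-1)^{g-1+n}\int_{\oM_{g,n}}s_{g,n,3g-3+n}(a,k)$ has the same sign as $\prod_{i}\sin(a_i\pi)$, and the regularity of $V_{g,n}$ needed to divide by $\sin(a_n\pi)$ --- are left as heuristics or deferred to a recursion you never construct. The monodromy argument (``the local contribution of $x_n$ is proportional to $|1-e^{2\pi i a_n}|$'') produces at best an absolute value and says nothing about signs, which is precisely the delicate point (cf.\ the footnote about the mismatch of the complex and symplectic orientations); and the claimed rewriting of the recursion of Theorem~\ref{th:main2} as a recursion for ${\rm Vol}_{g,n}$ with explicit non-negative kernels $K_\Gamma$ in which the new $\sin$ factors cancel is asserted without proof --- you yourself flag it as ``the analytic core'' and ``the hard part''. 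Reducing the statement to two unproven claims of comparable difficulty is a gap, not a proof; moreover there is no evidence offered that the kernels produced by formula~\eqref{formain:der} (whose boundary terms enter with a minus sign and whose vertex contributions carry signs $(-1)^{g(v)-1+n(v)}$) are non-negative.

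The paper's actual argument is much shorter and uses different inputs. The sign statement is imported directly from~\cite{SauFlat}: already at finite $k$, the integral of the top power of the area form has the same sign as $\prod_{i}\sin(a_i\pi)$ (a flat-geometric fact about strata of $k$-differentials, not re-derived here). Passing to the limit gives both $V_{g,n}(a)\prod_i\sin(a_i\pi)\geq 0$ and $V_{g,n}(a)=0$ when $a_n\in\{1,2\}$. Continuity of the quotient across $a_n=1,2$ then follows not from any recursion for ${\rm Vol}$ but from the fact, established at the end of Section~\ref{sec:ind}, that $V_{g,n}$ is of class $C^1$ (because $D^{3}_{g,n}$ and $D^{4,\ell}_{g,n}$ are continuous), so $V_{g,n}/\sin(a_n\pi)$ extends continuously through the zero set and the extension is non-negative by the sign statement. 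No induction on $2g-2+n$, no kernels, and no cancellation of sine factors is required. If you want to salvage your route, the irreducible missing ingredient is a proof of the finite-$k$ sign statement; once you have that, everything else in your sketch is superfluous.
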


\begin{figure}
    \centering
    (a)\includegraphics[scale=0.4]{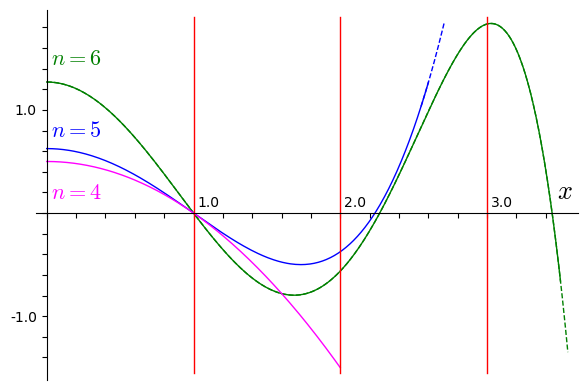}
     (b)\includegraphics[scale=0.4]{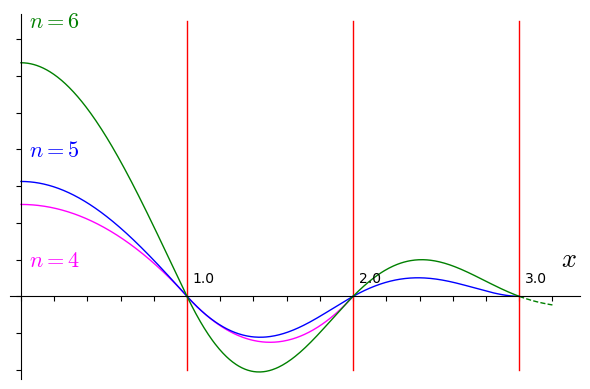} 
     
     (c)\includegraphics[scale=0.4]{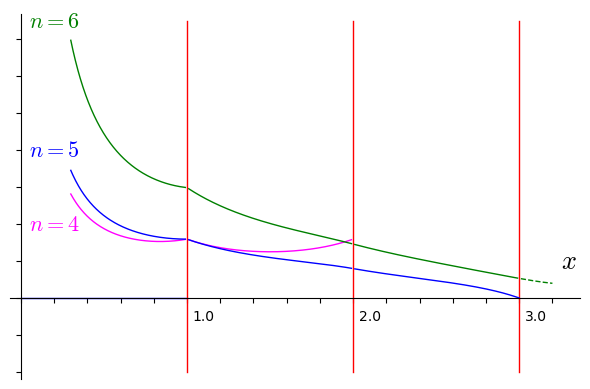}
    \caption{Graphs of the functions $x\mapsto (-1)^{n-1}P_{0,n}(x)$ (a), $V_{0,n}(x)$ (b), and $\frac{1}{{\rm sin}(x)}V_{0,n}(x)$ (c) for $n=3,4,$ and $5$ (here $x$ implicitly stands for the vector $(0,\ldots,0,x)$). The values for $x>2$ are based on our forthcoming work.}
    \label{fig:graph}
\end{figure}

 The vanishing of $V_{g,n}(a)$ when $a$ has an integral coordinate is the consequence of the vanishing of the top power of $\alpha_{g,n}(a,k)$  which is proved in~\cite{SauFlat} via flat geometric arguments. If we assume that Conjecture~\ref{conj} is valid, then this vanishing can be proved directly from a hyperbolic point of view via the existence of {\em isomonodromic deformations}. Indeed, the Teichmüller space associated with $\M_{g,n}(a)$ carries a {\em monodromy} morphism to the ${\rm SL}(2,\RR)$ character variety of a punctured surface with ``relative conditions'' on the monodromy around the punctures prescribed by $a$. For general values of $a$, this morphism is locally an isomorphism, and the Weil--Petersson symplectic form should be a pull-back of a symplectic form on the character variety as was shown by Goldman when $n=0$~\cite{Gol}. However, if a coordinate of $a$ is integral, then the relative condition is trivial, and the monodromy morphism is a submersion with complex fibers of positive dimension, which are the leaves of the {\em isomonodromy foliation}.  The existence of this foliation implies that the top power of the pull-back of a symplectic form along the monodromy morphism vanishes. 
 
\sskip

As the notation suggests, the value ${\rm Vol}_{g,n}(a)$ should be considered as an alternative definition of volumes of $\oM_{g,n}(a)$. Although  ${\rm Vol}_{g,n}(a)$ can be determined from $V_{g,n}(a)$ (or conjecturally $V_{g,n}^{WP}(a)$) at generic points of $\Delta_{g,n}$, it has the advantage of being non-trivial when $a$ is integral.  To define it geometrically, one would need to construct a volume form that is non-trivial in the direction of the isomonodromic foliations. From a flat geometric perspective, this function is the analog of Masur--Veech volumes of moduli spaces of differentials or flat surfaces~\cite{Vee,Mas,CheMoeSauZag,CheMoeSau,SauFlat}.  

\subsection{Recursion formulas for Mirzakhani polynomials} The geometry of moduli spaces of hyperbolic cone surfaces serves as a guide to introduce the objects studied in the present paper.  Beyond this original motivation, the classes $s_{g,n,d}(a)$ can be used to study tautological classes of moduli spaces of stable curves. In the next paper,  we will prove that certain vanishing properties of $s_{g,n,d}(a)$ can be used to produce tautological relations. 

\sskip

Here, we focus on the properties of the functions $V_{g,n}$. We will prove that in each chamber of polynomiality, $V_{g,n}$ can be explicitly expressed in terms of Mirzkhani polynomials. Then, the vanishing of $V_{g,n}$ at $a_n=1$ or $2$ produces relations between Mirzkhani polynomials. The first family of relations ($a_n=1$) recovers the Do--Norbury relations~\cite{DoNor}.  The second family $(a_n=2)$ implies the following theorem.

\begin{theorem}\label{th:kdv} For all $(g,n)\neq (0,3)$ with $n\geq 1$, we have
\begin{eqnarray}\label{eq:kdv} \nonumber
&&\!\!\!\!\!\!\!\!\!\!\!(1+a_1)P_{g,n}(1+a_1,a_2,\ldots) - (1-a_1)P_{g,n}(1-a_1,a_2,\ldots) \\  & &= \sum_{1<i\leq n}  \int_{t=a_i-a_1}^{a_i+a_1} tP_{g,n-1}(t,a_2, \ldots, \widehat{a_i},\ldots)\, dt \\ \nonumber
&&\,\,\,\,\,\,\,\,\, + \int_{t=-a_1}^{a_1} \int_{y=0}^t \frac{t (y-t)}{2} P_{g-1,n+1}(y,t-y,a_2, \ldots) \, dy\, dt\\ \nonumber
&&\,\,\,\,\,\,\,\,\, + \sum_{\begin{smallmatrix}g_1+g_2=g\\ I_1\sqcup I_2 =\{2,\ldots,n\}\end{smallmatrix}} \int_{t=-a_1}^{a_1} \int_{y=0}^t \frac{t (y-t)}{2} P_{g_1,|I_1|+1}(y, \{a_i\}_{i\in I_1}) P_{g_2,|I_2|+1}(t-y, \{a_i\}_{i\in I_2}) \, dy\, dt.
\end{eqnarray}
(where the notation $\widehat{a_i}$ means that we remove the variable).
\end{theorem}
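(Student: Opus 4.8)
The plan is to read off \eqref{eq:kdv} from the piecewise-polynomial structure of $V_{g,n}$ on $\Delta_{g,n}^{\leq2}$ and then impose the vanishing at $a_n=2$. Two inputs are used. From Theorem~\ref{th:main1}/Claim~\ref{claim} and the expression $[\omega_{g',n'}(a)]/4\pi^2=\tfrac12\kappa_1-\sum\tfrac{a_i^2}{2}\psi_i$ one gets $V_{g',n'}=(-1)^{g'-1+n'}P_{g',n'}$ on $\Delta_{g',n'}^{\leq1}$, since there $s_{g',n',3g'-3+n'}$ is the top piece of $\exp\!\big(-[\omega_{g',n'}(a)]/4\pi^2\big)$. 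Then I use Theorem~\ref{th:main2}, which computes the $V_{g',n'}$ on all of $\Delta_{g',n'}^{\leq2}$ by induction on $2g'-2+n'$; what I need from it is that the jump of the polynomial representing $V_{g',n'}$ across each affine wall of $\Delta_{g',n'}$ is an explicit integral over an auxiliary length variable $t$ — and, when the wall corresponds to a node-creating degeneration, also over a variable $y\in[0,t]$ splitting $t$ between the two branches of the node — of a product of lower-complexity data $s_\bullet$ against a polynomial kernel. Finally, Theorem~\ref{th:volgn} gives $V_{g,n}(a)=0$ whenever $a_n\in\{1,2\}$.

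Since both sides of \eqref{eq:kdv} are polynomials in $a_1,\dots,a_n$ and every polynomial piece of $V_{g,n}$ is $S_n$-invariant, I may place the distinguished variable — the one equal to $1+a_1$ in \eqref{eq:kdv} — in the last slot and take the remaining variables small and generic; the whole discussion is then inside $\Delta_{g,n}^{\leq2}$. I regard $a_n\mapsto V_{g,n}(a)$ as the continuous piecewise polynomial on $[0,2]$ equal to $(-1)^{g-1+n}P_{g,n}$ on the central chamber, and let $a_n$ increase through the walls it meets — these include the ``colliding-marking'' walls $a_n=1\pm a_i$ and $a_n=2-a_i$, the integral walls $a_n\in\{1,2\}$, and the higher-codimension coincidences $\sum_{i\in S}a_i\in\{1,2\}$ — recording each jump via Theorem~\ref{th:main2}. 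Taking the primitive in $a_n$ of each jump produces the kernels $t$ and $\tfrac{t(y-t)}{2}$, and invoking the already-proved $a_n=1$ identities (the Do--Norbury relations) together with the inductive hypothesis lets me cross the walls clustered near $a_n=1$ and $a_n=2$ while keeping every lower $V_\bullet$ identified with a Mirzakhani polynomial. In this way the accumulated jumps organize into the three boundary-divisor contributions of \eqref{eq:kdv}: the ``points collide'' terms $\int t\,P_{g,n-1}$, the non-separating terms $\int\!\!\int\tfrac{t(y-t)}{2}P_{g-1,n+1}$, and the separating terms $\int\!\!\int\tfrac{t(y-t)}{2}P_{g_1,\bullet}P_{g_2,\bullet}$.

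Writing the distinguished variable as $1+a_1$, this computation expresses $(-1)^{g-1+n}V_{g,n}(a)$ on the chamber just below $a_n=2$ in the form $(1+a_1)P_{g,n}(1+a_1,\dots)-(1-a_1)P_{g,n}(1-a_1,\dots)$ minus the right-hand side of \eqref{eq:kdv}, the term $-(1-a_1)P_{g,n}(1-a_1,\dots)$ and the factor $1+a_1$ being carried by the wall-crossing at $a_n=1$. Imposing $V_{g,n}(a)=0$ at $a_n=2$ (Theorem~\ref{th:volgn}) then turns this into \eqref{eq:kdv}, up to relabeling $a_n\rightsquigarrow 1+a_1$ and a final use of the symmetry of the $P$'s. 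The exclusion $(g,n)=(0,3)$, together with the implicit base cases in which $\oM_{g,n-1}$, $\oM_{g-1,n+1}$ or $\oM_{g_i,|I_i|+1}$ is empty, is where the induction is anchored and where \eqref{eq:kdv} is vacuous.

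The step I expect to be the main obstacle is the explicit matching in the second paragraph: confirming that the $a_n$-primitive of each jump furnished by Theorem~\ref{th:main2} is exactly $t$, resp.\ $\tfrac{t(y-t)}{2}$, over the ranges $[a_i-a_1,a_i+a_1]$, resp.\ $\{-a_1\le t\le a_1,\ 0\le y\le t\}$, with the correct signs, and that pushing the identity $V_\bullet=\pm P_\bullet$ past the intermediate walls through the Do--Norbury relations does not generate spurious piecewise-polynomial corrections. A secondary technical point is the bookkeeping of the walls accumulating at $a_n=1$ and $a_n=2$ coming from the partial sums $\sum_{i\in S}a_i$; one handles it either by choosing $a_1,\dots,a_{n-1}$ generically or by verifying that the net of their jump contributions cancels.
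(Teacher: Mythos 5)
Your overall philosophy is the right one --- express the volume in terms of Mirzakhani polynomials via the induction/wall-crossing formulas of Section~\ref{sec:ind} and impose the vanishing at an integral angle --- but the way you set it up loses exactly the degree of freedom that makes \eqref{eq:kdv} non-trivial. You apply the vanishing to $V_{g,n}$ itself, placing the variable that equals $1+a_1$ in the last slot and letting it reach $2$. The identity $V_{g,n}(a_2,\ldots,a_n,2)=0$ is a single polynomial identity in the $n-1$ parameters $a_2,\ldots,a_n$, so it can only yield \eqref{eq:kdv} at the one value $a_1=1$. Worse, your intermediate claim that $(-1)^{g-1+n}V_{g,n}$ on the chamber just below $a_n=2$ equals the left-hand side minus the right-hand side of \eqref{eq:kdv} cannot be correct: it would make \eqref{eq:kdv} equivalent to the identical vanishing of $V_{g,n}$ on that whole chamber, which is false (by Theorem~\ref{th:volgn} and the sign statement recalled at the start of Section~\ref{sec:WK}, $V_{g,n}$ is non-zero at generic points of that chamber). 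The reflected pair $P_{g,n}(1\pm a_1,\ldots)$ and the symmetric ranges $[a_i-a_1,a_i+a_1]$ and $[-a_1,a_1]$ are not features of any single chamber polynomial of $V_{g,n}$ in the last variable, and no wall-crossing at $a_n=1$ produces a term $P_{g,n}(1-a_1,\ldots)$: the corrections there involve only the lower-complexity polynomials $P_{g,n-1}$, $P_{g-1,n+1}$ and products.

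The paper avoids this by introducing an auxiliary $(n+1)$-st marked point: the relation actually used is $V_{g,n+1}(a_1,\ldots,a_n,2)=0$, an identity in all $n$ variables $a_1,\ldots,a_n$, because the variable sent to $2$ is not one of those appearing in \eqref{eq:kdv}. Two further steps that your sketch omits are then essential: (i) extracting the part of the resulting identity that is odd in $a_1$ --- since Mirzakhani polynomials are even in each variable this kills $P_{g,n+1}(a,2)$ and most correction terms, and it is precisely what manufactures the reflected evaluations $1\pm a_1$ and the symmetric integration domains; and (ii) differentiating the surviving integrated identity \eqref{eq:kdvint} with respect to $a_1$ to obtain \eqref{eq:kdv}. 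Without the extra marked point and the odd-part extraction there is no route from the vanishing at angle $4\pi$ to the stated recursion, so the proposal as written has a genuine structural gap rather than a repairable bookkeeping issue.
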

 By applying the Laplace transform to formula~\eqref{eq:kdv}, Du showed that it is equivalent to Mirzakhani's original recursion for the polynomials $P_{g,n}$~\cite{Du,Mir1}. It is worth noting that his approach relies on another heuristic interpretation of Weil--Petersson volumes for an angle $4\pi$.  However, his proof of Theorem~\ref{eq:kdv} reduces to Mirzkhani's original theorem while we provide an independent proof.  In particular, following the arguments of Mirzakhani, we obtain a new proof of Witten--Kontsevich's theorem: integrals of $\psi$-classes satisfy the Virasoro constraints~\cite{Wit,Kon}. If we assume that Conjecture~\ref{conj} holds, then our method shows that the Virasoro constraints (of the point) are a numerical consequence of the existence of isomonodromic foliations.

\subsection*{Acknowledgement}  I would like to thank Bertrand Deroin and Siarhei Finski for extensive discussions on the circle of ideas motivating this article.

%%%%%%%%%%%%%%%%%%%%%%%%%%%%%%%%%%
%%%%%%%%%%%%%%%%%%%%%%%%%%%%%%%%%%
%%%%%%%%%%%%%%%%%%%%%%%%%%%%%%%%%%
\section{Induction formulas for $s_{g,n,d}$} \label{sec:ind}
%%%%%%%%%%%%%%%%%%%%%%%%%%%%%%%%%%
%%%%%%%%%%%%%%%%%%%%%%%%%%%%%%%%%%
%%%%%%%%%%%%%%%%%%%%%%%%%%%%%%%%%%

In this section, we introduce several combinatorial structures that will be used in the rest of the text, and we state the formulas defining the functions $s_{g,n,d}$ by induction on $g$ and $n$. These formulas will be proved in Section~\ref{sec:growth}.

\subsection{Rational hyperbolic graphs} Let $\Gamma$ be a stable graph of genus $g$ with $n$ marked points (as defined in~\cite{GraPan}  for instance). 

\begin{definition} 
A {\em twist} on $\Gamma$ is a function $b:H(\Gamma)\to \RR$ satisfying the following constraints:
\begin{enumerate}
    \item For all edges $e=(h,h')$ we have $b(h)+b(h')=0$.
    \item If $(h_1,h_1')$ and $(h_2,h_2')$ are vertices between two vertices $v$ and $v'$ then $b(h_1)\geq 0\Rightarrow b(h_2)\geq 0$. In which case, we denote $v\geq v'$. \item The relation $\geq$ is transitive.
    \item If $v$ is a vertex then we denote by $b(v)$ the vector $(b(h))_{h\mapsto v}$ of twists at half-edges incident to $v$, and we have the inequality $|b(v)| \leq 2g(v)-2+n(v).$
\end{enumerate}
We define the twist at an edge as $b(e)=\sqrt{-b(h)b(h')}$ (if $e$ is the edge $(h,h')$) and the {\em multiplicity} of the twisted graph $(\Gamma,b)$ as $$m(\Gamma,b)\coloneqq \prod_{e\in E(\Gamma)} b(e).$$
%The pair $(\Gamma,b)$ is called a twisted graph.
We say that a twist is {\em compatible} with a vector $a\in \RR^n$ if $b(i)=a_i$ for all legs. 
\end{definition}

\begin{definition}
A {\em bi-colored graph}  is the data of 
$$\oGamma=(\Gamma,b,V^{\rm ab}\subset V, V=V_0\sqcup V_{-1})$$
where $b$ is a twist, $V^{\rm ab}$ is a set of vertices $v$ such that $b(v)$ is integral, and the partition of $V$ into two {\em levels} $V_0\sqcup V_{-1}$ satisfies: all edges of $\Gamma$ are between a vertex $v$ of $V_0$ and a vertex $v'$  $V_{-1}$, and we have $v>v'$.  
\end{definition}

\begin{definition}
A bi-colored is a {\em hyperbolic graph} if $|b(v)|=2g(v)-2+n(v)$ for all vertices $v$ in $V_{-1}$ and $V^{\rm ab}=\emptyset$. It is a {\em rational hyperbolic graph}  if $V_{-1}$ is reduced to a single vertex of genus 0.  
\end{definition}

\subsection{Expression of $s_{g,n}$ via rational graphs}
 We denote by ${\rm Rat}_{g,n}$ the set of {\em rational graphs}, i.e. graphs such that the vertex carrying the $n$-th leg is of genus 0  (the {\em central vertex}), and all edges connect this vertex to another vertex ({\em outer vertices}). 
 A hyperbolic graph structure on such graph $\Gamma$ is uniquely determined by the twist function. We denote by $\Delta_{\Gamma}(a)\subset \RR^{H(\Gamma)}$ the set of rational hyperbolic structures on $\Gamma$ compatible with $a$, i.e. the simplex  of functions $b\colon H(\Gamma) \to \RR$ satisfying the constraints:
 \begin{equation}
     \left\{
     \begin{array}{cl}
          b(h)+b(h')=0 \text{ and } b(h)\neq 0 & \text{if $(h,h')$ is an edge,}  \\
          b(h)\geq 0 & \text{if $h$ is incident to $v\in V^0$,}\\
          \sum_{h\mapsto v} b(h) < 2g(v)-2+n(v) & \text{if $v$ is a vertex in $V^0$,}\\
          \sum_{h\mapsto v} b(h) = 2g(v)-2+n(v) & \text{if $v$ is the central vertex,}\\
          b(i)=a_i & \text{for all $i\in \{1,\ldots,n\}$}.
     \end{array}\right.
 \end{equation}
 Equivalently, we can define $\Delta_{\Gamma}(a)$ as a simplex of $\RR_{>0}^{E(\Gamma)}$ if we specify only the twist at edges. This domain is empty or of dimension $h^{1}(\Gamma)$. 
 \begin{theorem}\label{th:main2} Let $s_{g,n,d}\colon \Delta_{g,n}^{\leq 2}\to H^{2d}(\oM_{g,n},\QQ)$ be the functions determined  by the following identities \begin{eqnarray}
 && \text{\it base of the induction: } s_{g,n,0}=1; \label{formain:base} \\
 &&\text{\it small angles value: } \label{formain:small}
     s_{g,n}(a)=e_{g,n}(a)\coloneqq {\rm exp}\left(-\frac{1}{2}\kappa_1 + \sum_{i=1}^n \frac{a_i^2}{2}\psi_i\right) \text{ if $a_n\leq 1/2$};\\
&&\text{\it derivative: } \label{formain:der}
     \frac{\partial}{\partial a_n} s_{g,n}=a_n\psi_n s_{g,n} - \sum_{\Gamma \in {\rm Rat}_{g,n}}  \int_{b\in \Delta_{\Gamma}(a)} \frac{m(\Gamma,b)}{|{\rm Aut}(\Gamma)|} \zeta_{\Gamma *}\left(\bigotimes_{v\in V^{\rm out}} s_{g(v),n(v)}(b(v))  \right) \, db.  
 \end{eqnarray}
  where $s_{g,n}=\sum_{d\geq 0} s_{g,n,d}$, $\zeta_{\Gamma}\colon \prod_{v \in V(\Gamma)} \oM_{g(v),n(v)}\to \oM_{g,n}$ is the gluing morphism, and $V^{\rm out}$ is the set of outer vertices of a rational graph.  The function $s_{g,n,d}$ is the absolute limit of the functions $k^{-2d}s_{g,n,d}(\cdot,k)\colon \Delta_{g,n}^{\leq 2}\to H^{2d}(\oM_{g,n},\QQ)$ as $k$ goes to infinity. 
 \end{theorem}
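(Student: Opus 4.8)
The plan is to prove that the three properties \eqref{formain:base}–\eqref{formain:der} uniquely determine the functions $s_{g,n,d}$ on $\Delta_{g,n}^{\leq 2}$, and then to verify that the limits $s_{g,n,d}$ of Claim~\ref{claim}/Theorem~\ref{th:main1} satisfy them. The uniqueness half is the easy direction: we order the pairs $(g,n)$ lexicographically (or by $3g-3+n$), and within a fixed $(g,n)$ we order by cohomological degree $d$. The base case \eqref{formain:base} pins down degree $0$. For $a_n \le 1/2$, equation \eqref{formain:small} gives $s_{g,n}(a)$ outright in terms of tautological classes. For $1/2 < a_n \le 2$, equation \eqref{formain:der} expresses $\partial s_{g,n}/\partial a_n$ in terms of $s_{g,n}$ itself (the term $a_n \psi_n s_{g,n}$, which is a linear ODE in $a_n$ that can be integrated via the integrating factor $e^{-a_n^2\psi_n/2}$) and in terms of the $s_{g(v),n(v)}(b(v))$ for outer vertices $v$; since every outer vertex has $n(v) < n$ or is a genuinely simpler piece and the central vertex does not reappear on the right, all data on the right-hand side is already determined by induction. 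Integrating from the initial condition at $a_n = 1/2$ supplied by \eqref{formain:small} then determines $s_{g,n}$ for all $a \in \Delta_{g,n}^{\leq 2}$. One checks along the way that the resulting function is continuous and piecewise polynomial: the walls of $\Delta_\Gamma(a)$ move linearly in $a_n$, the integrand is polynomial in $b$ with coefficients pushed forward from products of polynomials (by induction), and integrating a piecewise-polynomial family over a moving simplex yields a continuous piecewise polynomial, so the structure is preserved under the induction.

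For the existence half, I would invoke Theorem~\ref{th:main1}, which asserts that on $\Delta_{g,n}^{\leq 2}$ the normalized classes $k^{-2d} s_{g,n,d}(\cdot, k)$ converge uniformly to well-defined limits $s_{g,n,d}$ that are continuous piecewise-polynomial with tautological coefficients; write $s_{g,n} = \sum_{d \ge 0} s_{g,n,d}$ for the limit generating series. It then remains to check that this $s_{g,n}$ obeys the three listed identities. The base identity $s_{g,n,0} = 1$ is immediate since $\alpha_{g,n}(a,k)^{r-1}$ integrates to the fundamental class. The small-angle identity \eqref{formain:small} is exactly the content of the known results for $a_n \le 1/2$: in that regime the Weil–Petersson class is $\kappa_1 - \sum a_i^2 \psi_i$ (up to the normalization fixed in the introduction), so its exponential $e_{g,n}(a)$ is forced, and one matches this against the $k\to\infty$ limit of the Segre-type classes $s_{g,n,d}(a,k)$ coming from the area metric, using the $n=0$ result of Ma–Zhang plus the compactly-supported/small-angle input cited in the excerpt.

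The crux is the derivative identity \eqref{formain:der}, and this is where I expect the main obstacle to lie. The strategy is to differentiate the defining integrals $\int_{\Omega_{g,n}(a,k)} \alpha_{g,n}(a,k)^{r-1+d} \wedge p^*(\cdots)$ with respect to $a_n$ and to track where the dependence on $a_n$ enters: it enters through the order condition $\eta$ has a singularity of order $\ge k a_n - k$ at $x_n$, i.e. through a modification of the vector bundle $\Omega_{g,n}(a,k)$ along the divisor $x_n$. Increasing $a_n$ twists the bundle by $\psi_n$, producing the ``bulk'' term $a_n \psi_n s_{g,n}$, while the geometry of degenerations — where the flat surface develops a long thin part and splits off a rational central piece carrying the $n$-th point surrounded by outer components — contributes the boundary terms indexed by rational graphs $\Gamma \in {\rm Rat}_{g,n}$, with the multiplicity $m(\Gamma,b) = \prod_e b(e)$ and the simplex $\Delta_\Gamma(a)$ of admissible edge-twists arising as the volume of the space of ways the residues/twists can be distributed. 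Making this rigorous requires (i) controlling the $k\to\infty$ asymptotics of the area-metric curvature uniformly, including near the boundary of $\oM_{g,n}$, which is precisely the analytic input deferred to the subsequent paper in general but available on $\Delta_{g,n}^{\leq 2}$ by Theorem~\ref{th:main1}, and (ii) a careful bookkeeping of the combinatorial factors $|{\rm Aut}(\Gamma)|$ and the pushforward $\zeta_{\Gamma *}$ so that the wall-crossing contributions assemble into exactly the stated sum — the analog, in this flat-geometric setting, of Mirzakhani's unfolding of the integration over moduli space along a pair-of-pants decomposition. The hard part will be step (i): establishing that differentiating the limsup-defined currents commutes with the $k \to \infty$ limit and that the boundary contributions are captured exactly by rational hyperbolic graphs with no lower-order corrections; I would reduce this to the convergence statement of Theorem~\ref{th:main1} together with a local analysis near the relevant boundary strata, deferring the full-generality version to the companion paper as the excerpt indicates.
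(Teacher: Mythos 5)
Your proposal has a genuine gap, and in fact two of its three pillars do not match anything available in the paper. First, the existence half is circular: you invoke Theorem~\ref{th:main1} to obtain the uniform convergence of $k^{-2d}s_{g,n,d}(\cdot,k)$, but Theorems~\ref{th:main1} and~\ref{th:main2} are proved simultaneously in Section~\ref{sec:growth} --- the convergence statement is itself part of the conclusion of Theorem~\ref{th:main2} (``$s_{g,n,d}$ is the absolute limit of $k^{-2d}s_{g,n,d}(\cdot,k)$''), so it cannot be assumed as input. Second, your verification of the small-angle identity~\eqref{formain:small} leans on Ma--Zhang plus ``the compactly-supported/small-angle input,'' but that input only exists for $n=0$ and only for Conjecture~\ref{conj}, which the paper explicitly does not use; the actual argument is purely algebraic: for $a_n\le 1/2$ the space $\oOm_{g,n}(a,k)$ is a genuine vector bundle, its Chern characters are given by Bini's formula in terms of Bernoulli polynomials $B_{d+1}(ka_i)$, and Grothendieck--Riemann--Roch shows the top-order term in $k$ of the Segre class is $\frac{k^{2d}}{d!}\left(\frac{1}{2}\kappa_1-\sum\frac{a_i^2}{2}\psi_i\right)^d$.

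Third, and most importantly, your plan for the derivative identity~\eqref{formain:der} --- differentiate the limsup-defined currents in $a_n$ and argue that differentiation commutes with the $k\to\infty$ limit --- is exactly the analytic difficulty the paper is structured to avoid. The actual mechanism is discrete and cohomological: one adds $N(a,k)$ marked simple zeros (the vector $\overline{a}^k$), applies the known tautological relation for holomorphic-type $k$-differentials, and pushes forward along the forgetful map; Lemma~\ref{lem:forget} kills all bi-colored graph contributions except those giving rational hyperbolic graphs, yielding the exact identity of Proposition~\ref{pr:main}, $\xi+(ka_n)\psi_n=[\PP\oOm_{g,n}(a[a_n+1/k],k)]+\sum_\Gamma\sum_b\cdots$, valid for every rational pair $(a,k)$. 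Capping with $\xi^{r+d-1}$, pushing to $\oM_{g,n}$, and telescoping over the $ka_n$ steps of size $1/k$ turns~\eqref{formain:der} into a statement about Riemann sums converging to integrals, controlled by the induction hypothesis and the elementary Lemma~\ref{lem:growth}. Without Proposition~\ref{pr:main} (or an equivalent exact finite-$k$ relation) your step (i) has no route to completion within the tools of the paper.
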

These formulas will be proved in Section~\ref{sec:growth}. In this section, we use this theorem to describe explicitly the function $s_{g,n}$ in terms of the polynomials $e_{g,n}$.

\subsection{Expression of $s_{g,n}$ in terms of tautological classes} We use the fact that $$\frac{\partial}{\partial a_n} \psi_n^\ell s_{g,n}=a_n\psi_n^{\ell+1}s_{g,n}+\text{ boundary terms}$$  
to compute the functions $\psi_n^\ell s_{g,n}$ by an induction on $\ell$ that goes down from $\ell=3g-3+n$. 

\sskip

Let $\Gamma$ be a rational graph in ${\rm Rat}_{g,n}$. If  $\Delta_\Gamma(a)$ is non-empty for some $a\in \Delta_{g,n}^{\leq 2}$, then the central vertex carries at most $4$ half-edges. If $\ell \geq 2$, then $\zeta_\Gamma^*\psi^{\ell}=0$ because $n$ is supported on  a vertex $v$ with $(g(v),n(v))=(0,3)$ or $(0,4)$. This implies that $\frac{\partial}{\partial a_n} (\psi_n^\ell s_{g,n})=a_n\psi^{\ell +1}_n s_{g,n},$ and 
\begin{equation}
\psi_n^\ell s_{g,n}= \psi_n^\ell e_{g,n}.
\end{equation}

The next step is to compute $\frac{\partial}{\partial a_n}\psi_1s_{g,n}$. By a similar argument, the rational graphs contributing to the expression of this derivative are the ones with 4 half-edges on the central vertex. There are 3 types of such graphs:
\begin{center}
\begin{tabular}{|c|c|c|}
\hline
      $\Gamma_{\{i,j,n\}}$  & $\Gamma^{\rm root}_{\{i,n\}}$  &  $\Gamma_{\{i,n\}}^{g_1,g_2,I_1,I_2}$  \\ 
      for $1\leq i<j<n$ & for $1\leq i<n$ & for $1\leq i<n$, $g=g_1+g_2$,\\&& and $\{1,\ldots,n\}\setminus\{i,n\}= I_1\sqcup I_2$\\
      \hline
     $
\xymatrix@C=0.4em@R=1.2em{ 
&& &&\\
&&*+[Fo]{g}\ar@{-}[d] \ar@{-}[lu]_{\ldots}\ar@{-}[llu]\ar@{-}[rru]  \\ 
&&*+[Fo]{0}, \ar@{-}[d]\ar@{-}[rd]\ar@{-}[ld]\\
&i& j & n 
}$ & $
\xymatrix@C=0.4em@R=1.2em{ 
%&&%\ldots \widehat{i},\widehat{j},\widehat{n} \ldots
%\!\!\!\!\!\!\!\!\!\!\!\!\!\!\!\!\!\! {\{1,\ldots,n\}\setminus\{i,j,n\}} \!\!\!\!\!\!\!\!\!\!\!\!\!\!\!\!\!\!\!\!\!\!\!\!\!\!\!\!\!\!\!\!\!\!\!\! && \\
&&& \!\!\!\!\!\!\!\!\!\!\!\!\!\!\!\!\!\!
\!\!\!\!\!\!\!\!\!\!\!\!\!\!\!\!\!\!\!\!\!\!\!\!\!\!\!\!\!\!\!\!\!\!\!\!&&\\
&&&*+[Fo]{{\!{\scriptstyle g-1}\!}}\ar@{-}@/^/[d]\ar@{-}@/_/[d] \ar@{-}[lu]_{\ldots}\ar@{-}[llu]\ar@{-}[rru]  \\ 
&&&*+[Fo]{0}, \ar@{-}[rd]\ar@{-}[ld]\\
&&i&  & n}$ & $
\xymatrix@C=0.4em@R=1.2em{ 
%&&%\ldots \widehat{i},\widehat{j},\widehat{n} \ldots
%\!\!\!\!\!\!\!\!\!\!\!\!\!\!\!\!\!\! {\{1,\ldots,n\}\setminus\{i,j,n\}} \!\!\!\!\!\!\!\!\!\!\!\!\!\!\!\!\!\!\!\!\!\!\!\!\!\!\!\!\!\!\!\!\!\!\!\! && \\
&&I_1&& \!\!\!\!\!\!\!\!\!\!\!\!\!\!\!\!\!\!
\!\!\!\!\!\!\!\!\!\!\!\!\!\!\!\!\!\!\!\!\!\!\!\!\!\!\!\!\!\!\!\!\!\!\!\!&&I_2&\\
&&&*+[Fo]{g_1} \ar@{-}[rrd] \ar@{-}[lu]_{\ldots}\ar@{-}[llu] &&&& *+[Fo]{g_2}\ar@{-}[lld]  \ar@{-}[lu]_{\ldots}\ar@{-}[llu] \\ 
&&&&&*+[Fo]{0}, \ar@{-}[rd]\ar@{-}[ld]\\
&&&&i&  & n
}$  
\\
\hline
\end{tabular}
\end{center}

% \begin{itemize}
%     \item If $1\leq i<j<n$, then $\Gamma_{\{i,j,n\}}$ is the graph with two vertices connected by one edge, and such that  the legs $i,j$ and $n$ are incidents to the central vertex.
%     \item If $1\leq i<n$, then $\Gamma_{\{i,n\}}^{\rm loop}$ is the graph two vertices and connected by two edges, and such that the legs $i$ and $n$ are incidents to the central vertex.
%     \item If $1\leq i<n$, $g=g_1+g_2$, and $\{1,\ldots,n\}\setminus\{i,n\}= I_1\sqcup I_2$, then $\Gamma_{\{i,n\}}^{I_1,I_2,g_1,g_2}$ is the graph with two vertices of level 0 of genera $g_1$ and $g_2$ and carrying the legs in $I_1$ and $I_2$. These two vertices are connected by one edge to the central vertex that carries the legs $i$ and $n$.
% \end{itemize}
If $a$ is in $\Delta_{g,n}^{\leq 2}(a)$, then the twists of edges of these graphs are at most $1/2$. For $\ell\geq 0$, we set
\begin{eqnarray*}
     D_{g,n}^{4,\ell}(a)\!\! &\coloneqq & \!\! \sum_{1\leq i<j<n} (a_i+a_j+a_n-2)^+ \zeta_{\Gamma_{\{i,j,n\}} *} \left(\psi_n^\ell\otimes e_{g,n-2}(\ldots,\widehat{a_i},\ldots,\widehat{a_j},\ldots,a_n+a_i+a_j-2) \right) \\
    && \!\! + \sum_{1\leq i<n}  \int_{y=0}^{(a_i+a_n-2)^+} \frac{y (a_i+a_n-2-y)}{2} \\
    && \,\,\,\,\,\,\,\,\,\, \,\,\,\,\, \zeta_{\Gamma_{\{i,n\}}^{\rm loop}*}\left(\psi_n^\ell\otimes e_{g-1,n}(\ldots,\widehat{a_i},\ldots,y,a_n+a_i-2-y)\right) dy\\
    && \!\! + \sum_{1\leq i<n} \underset{I_1\sqcup I_2=\{1,\ldots,n-1\}\setminus\{i\}}{\sum_{g_1+g_2=g}}  \int_{y=0}^{(a_i+a_n-2)^+} \frac{y (a_i+a_n-2-y)}{2}  \\
    &&\,\,\,\,\,\,\,\,\,\, \,\,\,\,
    \zeta_{\Gamma_{\{i,n\}}^{I_1,I_2,g_1,g_2}*}\left(\psi_n^\ell \otimes e_{g_1,|I_1|+1}(y,\{a_i\}_{i\in I_1})\otimes e_{g_2,|I_2|+1}(a_i+a_n-2-y,\{a_i\}_{i\in I_2})\right) dy,\\
\end{eqnarray*}
where $(x)^+={\rm max}(x,0)$. Then we have 
\begin{equation}
    \frac{\partial}{\partial a_n} (\psi_n s_{g,n})=a_n\psi^{2}_n e_{g,n}-D_{g,n}^{4,1}(a).
\end{equation}

Finally, the expression of $\frac{\partial}{\partial a_n}s_{g,n}$ also involves graphs with $3$ legs on the central vertex:
\begin{center} 
\begin{tabular}{|c|c|c|}
\hline 
      $\Gamma_{\{i,n\}}$  & $\Gamma^{\rm root}_{\{n\}}$  &  $\Gamma_{\{n\}}^{g_1,g_2,I_1,I_2}$  \\ 
      for $1\leq i<n$ &  & for $g=g_1+g_2$, and $\{1,\ldots,n-1\}= I_1\sqcup I_2$\\ \hline
     $
\xymatrix@C=0.4em@R=1.2em{ 
&& &&\\
&& *+[Fo]{g}\ar@{-}[d] \ar@{-}[lu]_{\ldots}\ar@{-}[llu]\ar@{-}[rru]  \\ 
&&*+[Fo]{0}, \ar@{-}[rd]\ar@{-}[ld]\\
&i&  & n 
}$ & $
\xymatrix@C=0.4em@R=1.2em{ 
%&&%\ldots \widehat{i},\widehat{j},\widehat{n} \ldots
%\!\!\!\!\!\!\!\!\!\!\!\!\!\!\!\!\!\! {\{1,\ldots,n\}\setminus\{i,j,n\}} \!\!\!\!\!\!\!\!\!\!\!\!\!\!\!\!\!\!\!\!\!\!\!\!\!\!\!\!\!\!\!\!\!\!\!\! && \\
&&& \!\!\!\!\!\!\!\!\!\!\!\!\!\!\!\!\!\!
\!\!\!\!\!\!\!\!\!\!\!\!\!\!\!\!\!\!\!\!\!\!\!\!\!\!\!\!\!\!\!\!\!\!\!\!&&\\
&&&*+[Fo]{{\!{\scriptstyle g-1}\!}}\ar@{-}@/^/[d]\ar@{-}@/_/[d] \ar@{-}[lu]_{\ldots}\ar@{-}[llu]\ar@{-}[rru]  \\ 
&&&*+[Fo]{0}, \ar@{-}[rd]\\
&&&  & n}$ & $
\xymatrix@C=0.4em@R=1.2em{ 
%&&%\ldots \widehat{i},\widehat{j},\widehat{n} \ldots
%\!\!\!\!\!\!\!\!\!\!\!\!\!\!\!\!\!\! {\{1,\ldots,n\}\setminus\{i,j,n\}} \!\!\!\!\!\!\!\!\!\!\!\!\!\!\!\!\!\!\!\!\!\!\!\!\!\!\!\!\!\!\!\!\!\!\!\! && \\
&&I_1&& \!\!\!\!\!\!\!\!\!\!\!\!\!\!\!\!\!\!
\!\!\!\!\!\!\!\!\!\!\!\!\!\!\!\!\!\!\!\!\!\!\!\!\!\!\!\!\!\!\!\!\!\!\!\!&&I_2&\\
&&&*+[Fo]{g_1} \ar@{-}[rrd] \ar@{-}[lu]_{\ldots}\ar@{-}[llu] &&&& *+[Fo]{g_2}\ar@{-}[lld]  \ar@{-}[lu]_{\ldots}\ar@{-}[llu] \\ 
&&&&&*+[Fo]{0}, \ar@{-}[rd]\\
&&&&&  & n
}$ \\ 
\hline 
\end{tabular}
\end{center}
 We set
\begin{eqnarray*}
     D_{g,n}^{3}(a)\!\! &\coloneqq& \!\! \sum_{1\leq i<n} (a_i+a_n-1)^+ \zeta_{\Gamma_{\{i,n\}} *} \left(1\otimes s_{g,n-2}(\ldots,\widehat{a_i},\ldots,a_n+a_i-1) \right) \\
    && \!\! +  \int_{y=0}^{(a_n-1)^+} \frac{y (a_n-1-y)}{2} \zeta_{\Gamma_{\{n\}}^{\rm loop}*}\left(1\otimes s_{g-1,n}(\ldots,y,a_n-1-y)\right) dy\\
    && \!\! + \underset{I_1\sqcup I_2=\{1,\ldots,n-1\}\setminus\{i\}}{\sum_{g_1+g_2=g}}  \int_{y=0}^{(a_n-1)^+} \frac{y (a_n-1-y)}{2}  \\
    &&\,\,\,\,\,\,\,\,\,\, \,\,\,\,
    \zeta_{\Gamma_{\{n\}}^{I_1,I_2,g_1,g_2}*}\left(1 \otimes s_{g_1,|I_1|+1}(y,\{a_i\}_{i\in I_1})\otimes s_{g_2,|I_2|+1}(a_n-1-y,\{a_i\}_{i\in I_2})\right) dy.\\
\end{eqnarray*}
Altogether, we have the following expression:
\begin{equation}
    \frac{\partial}{\partial a_n}  s_{g,n}=a_n\psi_n s_{g,n}-D_{g,n}^3(a)-D_{g,n}^{4,0}(a).
\end{equation}
The the function $s_{g,n,d}$ is a piece-wise polynomial of degree $2d$ defined by 
\begin{equation}
     s_{g,n}(a)=e_{g,n}(a[0])+\int_{t=0}^{a_n} \frac{\partial}{\partial a_n} s_{g,n}(a[t]) dt,
 \end{equation}
 where $a[t]$ is the vector obtained from $a$ by replacing $a_n$ by $t\in \RR_{>0}$. Moreover, $s_{g,n,d}$ is of class $C^1$ as $D_{g,n}^{3}$ and $D_{g,n}^{4,\ell}$ are continuous.

\section{Tautological calculus in moduli spaces of differentials}\label{sec:taut}

Here, we recall some elements of intersection theory on moduli spaces of multi-differentials from~\cite{Sau,BCGGM2,SauFlat}. The main result that will be used in the next sections is Proposition~\ref{pr:main}. 

\subsection{Incidence variety compactification} A {\em rational pair}  $(a,k)$ of a subset $E$ of 
$\RR^n$ is the data  of a rational vector $a$ in $E$ and a positive integer $k$ such that $ka$ is integral. We denote by $\QQ\PP(E)$ the set of rational pairs in $E$. If $(a,k)$ is a rational pair, then a {\em $k$-twisted graph} $(\Gamma,b)$ (compatible with $a$) is a twisted graph such that $kb$ takes integral values. 

\sskip

Let $(a,k)$ be a rational pair of $\RR^n$, and let $P=(p_1,\ldots,p_n)$ be a vector of positive integers such that $p_i\geq k|a_i|$ for all $i\in \{1,\ldots,n\}$. We denote by $\oOm_{g,n}^{k,P}\to \oM_{g,n}$ the vector bundle with fiber
$$
H^0\left(C,\omega_{\rm log}^{\otimes k}(p_1x_1+\ldots+
p_nx_n\right)$$
(where $\omega_{\rm log}$ is the log-dualizing sheaf $\omega_C(x_1+\ldots+x_n)$). The space $\Om_{g,n}(a,k)$ of $k$-differentials of type $a$  is a sub-cone of $\oOm_{g,n}^{k,P}$. We denote by $\oOm_{g,n}(a,k)$ the closure of $\Om_{g,n}(a,k)$ in $\oOm_{g,n}^{k,P}$. This compactification does not depend on the choice of $P$ and is called the {\em incidence variety compactification}. 

\sskip

We assume here that $a$ is non-negative and  $|a|=2g-2+n$. %We recall how to constructu boundary components of $\PP\oOm_{g,n}(a,k)$ to bi-colored graphs.  
Let $\oGamma$ be a $k$-bi-colored graph compatible with $a$. This graph determines a boundary component $\oOm_{g,n}(a,k) 
$ of co-dimension at least $1$. To construct it, we consider the cones
$$
\widetilde{\Om}_{\oGamma}(k)_{i} \coloneqq \prod_{v \in V_{i}\setminus (V_{i}\cap V^{\rm ab})} \oOm^{\rm nab}_{g,n}(a,k) \prod_{v \in V_{i}\cap V^{\rm ab}} \oOm_{g,n}^{\rm ab}(a,k)
$$
where $\oOm_{g,n}^{\rm ab}(a,k)$ and $\oOm_{g,n}^{\rm nab}(a,k)$ are the closure of the loci of $k$-differentials on smooth curves obtained (respectively not obtained) as $k$-th power of a meromorphic one-form, and 
$$
p_i\colon \widetilde{\Om}_{\oGamma}(k)_{i}\to \oM_{i}\coloneqq \prod_{v \in V_i} \oM_{g(v),n(v)}
$$
is defined by forgetting the differential. At level 0, we set $\oOm_{\oGamma}(k)_{0}= \widetilde{\Om}_{\oGamma}(k)_{0}$. At level $-1$, we denote by $\oOm_{\oGamma}(k)_{-1}$ the sub-locus of $\widetilde{\Om}_{\oGamma}(k)_{-1}$ defined by the {\em global residue condition} (GRC) of~\cite{BCGGM2}. We do not state the GRC here, but we will use the two following facts from \cite{BCGGM2}:
\begin{enumerate}
\item The co-dimension of $\oOm_{\oGamma}(k)_{-1}$ in $\widetilde{\Om}_{\oGamma}(k)_{-1}$ is at most the number of vertices in $V_0\cap V^{\rm ab}$.
\item If $V_0\cap V^{\rm ab}$ is empty then the map $p_{-1}$ has fiber of positive dimension along $\oOm_{\oGamma}(k)_{-1}$ unless $V_{-1}$ is of size $1$.
\end{enumerate}
We set $$
\oOm_{\oGamma}(k)\coloneqq \oOm_{\oGamma}(k)_{0}\times \PP\oOm_{\oGamma}(k)_{-1}.$$ There is a canonical morphism
$
\zeta_{\oGamma,k}\colon \PP\oOm_\oGamma(k)\to \PP\oOm(a,k)
$
defined by gluing the curves along nodes and imposing that the differential vanishes on vertices in $V_{-1}.$ With this notation, the space $\oOm_{\oGamma}(k)$ is a cone over $\oM_{\Gamma}$ and the following diagram commutes
$$
\xymatrix@C=4pc{
\oOm_{\oGamma}(k) \ar[r]^{\zeta_{\oGamma,k}} \ar[d]_{(p_0\times {\rm Id}_{\oM_{-1})}} & \oOm_{g,n}(a,k) \ar[d]^p \\
\oM_\Gamma \ar[r]_{\zeta_{\Gamma}}& \oM_{g,n}.
}
$$

\subsection{Adding simple zeros} Let $(a,k)$ be a rational pair in $ \Delta_{g,n}^{<2}$. In the rest of the section we assume that $k> 4(g+1)$ and $(g,n,a)\neq (1,1,(1-1/k))$. We denote  
$$\overline{a}^k=(a_1,\ldots,a_n,\underset{N(a,k)\times }{\underbrace{(1+1/k),\ldots,(1+1/k)}}),$$ 
where $N(a,k)=k(2g-2+n-|a|)$. We denote by $\pi(a,k)\colon \oOm_{g,n+N(a,k)}(\overline{a}^k,k)\to \oOm_{g,n}(a,k),$
the  morphism defined by forgetting the $N(a,k)$ marked points. It is dominant and of degree $N(a,k)!$.

\begin{lemma}\label{lem:forget} Let $\oGamma$ be a bi-colored graph compatible with $\overline{a}^k$ such that $n$ is incident to a vertex in $V_{-1}$. Let $X$ be an irreducible component of $\PP\oOm_\oGamma(k)_{-1}$ then 
$(\pi(a,k) \circ \zeta_{\Gamma,k})_* [X]=0$ unless $V^{\rm ab}$ is empty and $V_{-1}$ is reduced to a unique vertex of genus 0 and either
\begin{enumerate}
    \item all the markings forgotten by $\pi(a,k)$ are incident to the vertex in $V_0$,
    \item or this vertex has one edge, no legs in $\{1,\ldots,n-1\}$ and one leg forgotten by $\pi(a,k)$.
\end{enumerate}
\end{lemma}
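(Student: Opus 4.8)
The strategy is a dimension count on the fibers of $\pi(a,k)\circ\zeta_{\oGamma,k}$, combined with the two structural facts about the global residue condition recalled just before the statement. Since $\pi(a,k)$ is finite of degree $N(a,k)!$ and $\zeta_{\oGamma,k}$ is the gluing morphism, the pushforward $(\pi(a,k)\circ\zeta_{\oGamma,k})_*[X]$ vanishes precisely when $\dim \overline{(\pi(a,k)\circ\zeta_{\oGamma,k})(X)} < \dim X$, i.e.\ when the composed map is not generically finite on $X$. So I must show that, outside the two listed exceptional configurations, the map contracts positive-dimensional loci.

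First I would dispose of the case $V^{\mathrm{ab}}\neq\emptyset$: if some vertex lies in $V_0\cap V^{\mathrm{ab}}$, the global residue condition cuts down $\oOm_{\oGamma}(k)_{-1}$ inside $\widetilde\Om_{\oGamma}(k)_{-1}$ by codimension at most the number of such vertices (fact (1)), which is strictly less than the dimension that would be needed for $\zeta_{\oGamma,k}$ to be generically finite; concretely, a vertex in $V_0\cap V^{\mathrm{ab}}$ carries a $k$-th power of a one-form, so its space of differentials is smaller than the space of genuine $k$-differentials with the same pole/zero data by exactly the "abelian defect," and this defect is not compensated by the GRC. This forces a contraction. (Vertices of $V^{\mathrm{ab}}$ at level $-1$ are killed because on level $-1$ the differential is set to zero under $\zeta_{\oGamma,k}$ anyway, so the constraint $b(v)$ integral on such a vertex only restricts the twist domain, and combined with fact (2) below still yields positive-dimensional fibers unless $V_{-1}$ is a single vertex.) Hence I may assume $V^{\mathrm{ab}}=\emptyset$.

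With $V^{\mathrm{ab}}=\emptyset$, fact (2) says $p_{-1}$ has positive-dimensional fibers on $\oOm_{\oGamma}(k)_{-1}$ unless $|V_{-1}|=1$. If $|V_{-1}|\geq 2$, the map $\zeta_{\oGamma,k}$ followed by $p$ factors through $\zeta_\Gamma$ on the base, and the positive-dimensional fiber of $p_{-1}$ is genuinely contracted (projectivizing level $-1$ removes only one dimension, not enough), so the pushforward vanishes. Thus I am reduced to $V^{\mathrm{ab}}=\emptyset$, $V_{-1}=\{v_0\}$ a single vertex; because $n$ is incident to a vertex of $V_{-1}$ by hypothesis, $v_0$ carries the $n$-th leg. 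It remains to check the genus of $v_0$ is $0$ and to locate the forgotten markings. Here I would invoke the part of Claim/Theorem input that $(a,k)\in\Delta_{g,n}^{<2}$ and $\bar a^k$ has all extra entries equal to $1+1/k$: the twist inequality $|b(v_0)|=\sum_{h\mapsto v_0} b(h)$ together with $|b(v_0)| = 2g(v_0)-2+n(v_0)$ on level $-1$ (it is a hyperbolic-type vertex after the GRC forces the residue/twist balance), and the bound $a_i<1/2$ on the non-forgotten legs plus $a_n<2$, pin down $g(v_0)=0$ and bound $n(v_0)$. Finally, tracking which of the $N(a,k)$ forgotten points can sit on $v_0$: if $v_0$ has genus $0$ and more than the allowed legs, $\pi(a,k)$ forgetting those points again drops dimension on the level-$0$ side unless they all sit on the single level-$0$ vertex adjacent to $v_0$ — giving case (1) — or $v_0$ is the minimal unstable-on-forgetting configuration with one edge, one forgotten leg, and the leg $n$ — giving case (2). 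A short case analysis of the remaining small $(g(v_0),n(v_0))$ finishes it.

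The main obstacle I anticipate is the last step: showing that in every surviving configuration the level-$0$ part is \emph{not} contracted, i.e.\ that the map is genuinely generically finite on $X$ exactly in cases (1) and (2) and nowhere else. This requires a careful bookkeeping of the dimension of $\oOm_\oGamma(k)_0$ against $\oOm_{g,n}(a,k)$ after forgetting points, using that forgetting a point with zero of order $1$ (the simple zeros of order $(1+1/k)k - k = 1$) at a genus-$0$ vertex can either be absorbed (if the vertex stays stable) or creates a contraction; the combinatorics of when $\pi(a,k)$ restricted to the boundary stratum stays dominant onto its image is the delicate point, and it is where the hypotheses $k>4(g+1)$ and $(g,n,a)\neq(1,1,(1-1/k))$ are presumably used to exclude degenerate low-complexity coincidences.
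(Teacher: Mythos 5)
Your high-level strategy --- reduce to showing that $\pi(a,k)\circ\zeta_{\oGamma,k}$ is not generically finite on $X$, and invoke the two recalled facts about the GRC to handle $V^{\rm ab}\neq\emptyset$ and $|V_{-1}|\geq 2$ --- is the same as the paper's, and your use of fact (2) to force $|V_{-1}|=1$ is essentially correct. But the proposal is missing the one quantitative device that makes the whole case analysis work, and you have in fact flagged the missing step yourself as ``the delicate point.'' The paper's engine is the degree count on each level $-1$ vertex $v$:
\begin{equation*}
2g(v)-2 \;=\; \frac{N(v)}{k} \;+\; \underset{i\mapsto v}{\sum_{i\in\{1,\ldots,n\}}}(a_i-1)\;+\;\underset{h\mapsto v}{\sum_{(h,h')\in E(\Gamma)}}(b(h)-1),
\end{equation*}
where $N(v)$ is the number of forgotten legs on $v$ (each contributing a simple zero, hence $1/k$ to the degree). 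Since $a_i<1/2$ for $i<n$, $a_n\leq 2$, and $b(h)<0$ on level $-1$, this identity forces $N(v)>k/2$ for every ``bad'' configuration (positive genus at level $-1$, a $V^{\rm ab}$ vertex of genus $\geq 2$ above, two level-$0$ neighbours, an extra original leg, etc.). Combined with $k>4(g+1)$, this beats the codimension bound $g-1+\sum_{v\in V_{-1}}(g(v)-1)<2g$ for the image of $p_{-1}$, which is what actually kills the pushforward. Without this inequality you cannot rule out positive-genus vertices at level $-1$, you cannot run the $V^{\rm ab}$ analysis (your argument there --- that the GRC codimension from fact (1) is ``strictly less than the dimension needed'' --- is not a proof, and the paper's actual route is to first place all $V^{\rm ab}$ vertices at level $0$ using non-integrality of the entries of $\overline{a}^k$, then apply the degree count again), and you cannot pin down $g(v_0)=0$ or locate the forgotten markings in the final step.

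Two smaller inaccuracies: the relation $|b(v_0)|=2g(v_0)-2+n(v_0)$ you invoke is not ``forced by the GRC''; it is simply the degree of the $k$-differential on that component and it includes the $N(v)/k$ term coming from the simple zeros, which is precisely the term you need and omit. And the hypotheses $k>4(g+1)$ and $(g,n,a)\neq(1,1,1-1/k)$ are not there to exclude vague ``low-complexity coincidences'': the former makes $k/2$ exceed $2g$, and the latter excludes the single configuration $N(a,k)=1$ in genus one where the codimension-one level $-1$ locus would otherwise survive. As written, the proposal identifies the right skeleton but does not contain a proof of the lemma.
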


\begin{proof} %We fix a bi-colored graph $\oGamma$ and an irreducible  component of $\PP\oOm_\oGamma(k)$ such that $\pi(a,k)_* [X]\neq 0$. 
Let $\oGamma$ be a bi-colored graph. 
For each vertex, we denote by $N(v)$ the number of legs incident to $v$ which are forgotten by $\pi(a,k)$ and by $N_{-1}= \sum_{v\in \ell^{-1}(-1)} N(v)$. We will show that  $N_{-1}$ is bigger than the co-dimension of $p_{-1}\left(\PP\oOm_\oGamma(k)_{-1}\right)$ in $\oM_{-1}$ and thus $(\pi(a,k) \circ \zeta_{\Gamma,k})_*[X]=0$ is trivial for all irreducible component of $\PP\oOm_\oGamma(k)$ unless $\oGamma$ satisfies the conditions of the lemma.   

\sskip

\noindent{\em {\smash{Step 1:}} no vertex in $V_{-1}$ is of positive genus.} The vertices of $V_0\cap V^{\rm ab}$  are of genus at least one. This set is of size at most $g$. Besides, we have
$$
{\rm dim}\PP\widetilde{\Om}_\oGamma(k)_{-1} = {\rm dim}\oM_{-1} +1-\sum_{v\in V_{-1}} (g(v)-1).$$  Therefore the co-dimension of $p_{-1}\left(\PP\oOm_\oGamma(k)_{-1}\right)$ in $\oM_{-1}$ is at most
\begin{eqnarray}
   g-1+\sum_{v\in V_{-1}} (g(v)-1) < 2g.
\end{eqnarray}
If $v$ is a vertex of level $-1$ then 
\begin{eqnarray}\label{rel:Nv}
    2g(v)-2 = N(v)/k + \underset{i\mapsto v}{\sum_{i\in \{1,n\}}} (a_i-1) + \underset{h\mapsto v}{\sum_{(h,h')\in E(\Gamma)}} (b(h)-1).
\end{eqnarray}
The first sum is over the legs in $\{1,\ldots,n\}$ incident to $v$, and this sum is smaller than $1$ if $n$ is the only element in this set and $1/2$ otherwise. The second sum is over half-edges incident to $v$ and is smaller than $-1$ (as the sum is non-empty and $b(h)$ is negative). Therefore we have
\begin{equation}
N(v)/k> 2g(v)-1- \underset{i\mapsto v}{\sum_{i\in \{1,n\}}} (a_i-1) 
\end{equation}
and $N(v)> k/2$ unless $v$ is of genus 0, or $v$ is of genus $1$, with only two other half-edges ( the marking $n$ and a half of an edge to the upper vertex).  If $\oGamma$ has a vertex that does not satisfy one of these conditions, then $N_{-1}$ is bigger than co-dimension  $p_{-1}\left(\PP\oOm_\oGamma(k)_{-1}\right)$ in $\oM_{-1}$.

\sskip

To complete the first step, we need to exclude the possibility of a vertex $v$ of genus 1 in $V_{-1}$. To do so, we remark that if the markings $n$ and half of an edge are incident to this vertex, then $N(v)$ is at least 2. As there is only one edge to the vertex of level $0$, we have two possibilities:
\begin{itemize}
    \item If the GRC is trivial then $v$ is the only vertex of level $-1$ and the co-dimension of the level $-1$ in the moduli space of curves is $g(v)=1$ which is smaller than $N(v)$.
    \item If the GRC is non-trivial then $b(h)\leq -1$ for an   half-edge $h$ to an upper vertex in $V^{\rm ab}$) and thus $N(v)>k/2$ as above. 
\end{itemize}

\noindent{\em {\smash{Step 2:}} $V^{\rm ab}$ is empty.} 
Each vertex in $V_{-1}$ contains at least one leg, and the vector $\overline{a}^k$ has no integral coordinate apart from $a_n$ that can be equal to $1$. A vertex carrying the $n$-th leg necessarily has another leg if this is true. Therefore, all vertices of $V^{\rm ab}$ are of level 0. 
\sskip

Let $v_0$ be a vertex in  $V^{\rm ab}$. If this vertex is of genus at least $2$, then $b(h)\geq 2$ for at least one half-edge incident to $v_0$ and going to a vertex $v$ of level 0. For this vertex $v$ we again find that $N(v)>k/2$ because the second sum in~\eqref{rel:Nv} is at most $-3$. Besides, if we assume that there are at least $2$ vertices of level $0$, then at least one vertex $v$ of level $-1$ is connected to $v_0$ and another vertex. We again find that the second sum in~\eqref{rel:Nv} is at most $-3$. Therefore, $v_0$ is the only vertex of level $0$ of genus $1$. If this vertex is connected twice to a vertex $v$ of level $-1$, then $N(v)>k/2$ by a similar argument. Therefore, we must have $g=1$. 

\sskip

To exclude the possibility that $g=1$, we remark that each vertex of level $-1$ must carry at least one leg in $\{1,\ldots,n\}$. If $n>1$ and $v$ (of level $-1$) carrying the first leg, then the first sum in~\eqref{rel:Nv} is smaller than $1/2$ while the second one is $-2$ so $N(v)>k/2$ again. Therefore, $(g,n)=(1,1)$ and the graph $\oGamma$ must have exactly one edge connecting a vertex of genus $1$ to a vertex of genus $0$ with all legs. This situation cannot occur because the co-dimension of the level $-1$ space is $1$, and we have excluded the case $N(a,k)=1$ in our assumption for the section.

\sskip

\noindent{\em \smash{Step 3:} end of the proof.} The GRC is trivial, so there can be only a vertex of level $-1$; otherwise, $\zeta_{\oGamma,k}$ has fibers of positive dimension. Finally, the co-dimension of  $p_{-1}\left(\PP\oOm_{\oGamma}(k)_{-1}\right)$ in the moduli space of curves is 0, so the unique vertex of level $0$ must satisfy $N(v)=0$, or it has to be contracted by the forgetful morphism. In the latter situation, it carries exactly the leg $n$, one half-edge, and a forgotten leg.
\end{proof}

\subsection{Tautological relations} Let $\xi$ be the first Chern Class of $\mathcal{O}(1)$ in $\PP\oOm(a,k)$. By~\cite{CosMoeZac}, for all $m\geq 0$ we $p_*\xi^m \in H^*(\oM_{g,n},\QQ)$ is the cohomology class of the current $p_*\alpha_{g,n}(a)^d$ defined in the introduction. In order, to compute this cohomology class, for $a\in \Delta_{g,n}^{\leq 2}$ we will apply~\cite[Theorem 2.7]{SauFlat}. However, this theorem is stated for $|a|=2g-2+n$. To use it in our setting, we use the forgetful morphism  $\pi(a,k)$. 
\sskip

To state our main proposition of the section, we first define boundary components of $\PP\oOm_{g,n}(a,k)$ associated with rational graphs. If $\Gamma$ is a rational graph, then we denote by $\Delta_{\Gamma}(a,k)\subset \Delta_{\Gamma}(a)$ the set $k$-rational hyperbolic structures on $\Gamma$.  If $(\Gamma,b)$ is a rational hyperbolic graph, then we denote
$$
\oOm_{\Gamma}(k)\coloneqq \oM_{0,n(v_c)} \times \prod_{v\in V^{\rm Out}} \oOm^{\rm nab}_{g(v),n(v)}(b(v),k) 
$$
where $v_c$ is the central vertex of the graph and the morphism $\zeta_{(\Gamma,b),k}\colon \oOm_{\Gamma}(k) \to \oOm_{g,n}(a,k) 
$ 
is defined by gluing the curves along nodes and imposing that the differential vanishes on the central vertex. The image of this morphism is contained in the boundary $\oOm_{g,n}(a,k)$ because the GRC is trivial for this space of differentials. Besides, the morphism $\zeta_{(\Gamma,b),k}$ is finite of degree $|{\rm Aut}(\Gamma,b)|$.
\begin{proposition}\label{pr:main}  The following relation holds
\begin{equation}\label{for:intmain}
    \xi+(ka_n) \psi_n = [\PP\oOm_{g,n}(a[a_n+1/k],k)] + \sum_{\Gamma\in {\rm Rat}_{g,n}}\sum_{b\in \Delta_{\Gamma}(a,k)} \frac{m(\Gamma,b)k^{|E(\Gamma)|}}{|{\rm Aut}(\Gamma)|} \zeta_{(\Gamma,b),k\, *}[\PP\oOm_{(\Gamma,b)}(k)],
\end{equation}
where we recall that $a[t]$ is the vector obtained from $a$ by replacing the last coordinate by $t$, and $\PP\oOm_{g,n}(a[a_n+1/k],k)$ is considered as a sub-stack of $\PP\oOm_{g,n}(a,k)$. 
\end{proposition}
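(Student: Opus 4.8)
The strategy is to deduce Proposition~\ref{pr:main} from the known tautological relation on moduli spaces of $k$-differentials with total angle $2g-2+n$ (that is, \cite[Theorem 2.7]{SauFlat}), by pulling back along the forgetful morphism $\pi(a,k)$ that removes the $N(a,k)$ added simple zeros of angle $(1+1/k)2\pi$. First I would set up the geometry: on $\oOm_{g,n+N(a,k)}(\overline a^k,k)$ the vector $\overline a^k$ satisfies $|\overline a^k| = 2g-2+n+N(a,k)/k = 2g-2+n$ in the appropriate normalization, so \cite[Theorem 2.7]{SauFlat} applies and gives an expression for $\xi + (k a_n)\psi_n$ (the $\psi$-class at the marking $n$) in terms of boundary divisors indexed by bi-colored graphs $\oGamma$ compatible with $\overline a^k$ whose $n$-th leg lies on a level $-1$ vertex, together with the ``shifted'' locus $\PP\oOm(\overline a^k[a_n+1/k],k)$. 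Then I would push this identity forward by $\pi(a,k)_*$, using the projection formula and the fact that $\pi(a,k)$ is dominant of degree $N(a,k)!$, so that $\pi(a,k)_* \pi(a,k)^*(-) = N(a,k)!\,(-)$ and one recovers the class $\xi + (ka_n)\psi_n$ upstairs on $\PP\oOm_{g,n}(a,k)$ after dividing by $N(a,k)!$.

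\textbf{Controlling the boundary terms.} The heart of the argument is then to identify which boundary graphs survive the pushforward. This is exactly what Lemma~\ref{lem:forget} provides: for a bi-colored graph $\oGamma$ compatible with $\overline a^k$ with the $n$-th leg on level $-1$, the pushforward $(\pi(a,k)\circ \zeta_{\oGamma,k})_*[X]$ of any component $X$ vanishes unless $V^{\rm ab}=\emptyset$, the level $-1$ consists of a single genus-$0$ vertex, and either (1) all forgotten markings sit on level $0$, or (2) the level $-1$ vertex has one edge, no marking in $\{1,\dots,n-1\}$, and exactly one forgotten leg. Case (1) gives precisely the rational graphs $\Gamma \in \mathrm{Rat}_{g,n}$ with a $k$-rational hyperbolic structure $b$: after pushing forward, the $N(a,k)$ forgotten markings on the level-$0$ (outer) vertices are integrated out, converting $\oOm^{\mathrm{nab}}_{g(v),n(v)}(b(v)^k,k)$ into $\oOm^{\mathrm{nab}}_{g(v),n(v)}(b(v),k)$ and turning the divisor into $\zeta_{(\Gamma,b),k\,*}[\PP\oOm_{(\Gamma,b)}(k)]$; the combinatorial bookkeeping of automorphisms and of the number of ways to distribute forgotten markings, together with the degrees $\deg \zeta_{(\Gamma,b),k} = |\mathrm{Aut}(\Gamma,b)|$ and the edge-twist factors, must be checked to produce the coefficient $m(\Gamma,b)k^{|E(\Gamma)|}/|\mathrm{Aut}(\Gamma)|$. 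Case (2) is the degenerate graph where the central vertex is $(0,3)$ carrying $n$, one forgotten leg, and one edge; pushing this forward contracts that unstable vertex and produces exactly the shifted locus $\PP\oOm_{g,n}(a[a_n+1/k],k)$, matching the first term on the right of \eqref{for:intmain} (the shift $a_n \mapsto a_n+1/k$ comes from absorbing one forgotten angle $(1+1/k)$ worth of zero into the marking $n$). The terms on the left, $\xi + (ka_n)\psi_n$, are compatible with the pushforward because $\psi_n$ commutes with forgetting the \emph{other} markings up to lower-order comparison terms that themselves land in the already-classified boundary; more precisely I would use $\pi(a,k)^*\psi_n = \psi_n - (\text{boundary})$ and absorb the correction.

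\textbf{Main obstacle.} I expect the principal difficulty to be purely bookkeeping rather than conceptual: matching the numerical coefficient $m(\Gamma,b)k^{|E(\Gamma)|}/|\mathrm{Aut}(\Gamma)|$ exactly. One must track (i) the multiplicity $m(\Gamma,b) = \prod_e b(e)$ coming from the orders of vanishing / ramification along the edges in \cite[Theorem 2.7]{SauFlat}, (ii) the factor $k^{|E(\Gamma)|}$ arising because the edge twists of the upstairs graph compatible with $\overline a^k$ are the $k$-scaled integral data while the downstairs $b$ records the real twists, (iii) the ratio $|\mathrm{Aut}(\Gamma,b)|/|\mathrm{Aut}(\Gamma)|$ versus the degree of $\zeta_{(\Gamma,b),k}$ and the number of markings being forgotten, and (iv) the factor $N(a,k)!$ from the degree of $\pi(a,k)$, which should cancel against the count of ways the forgotten markings distribute among the outer vertices (a multinomial that reassembles into the factorials hidden in the definition of $\oOm^{\mathrm{nab}}_{g(v),n(v)}(b(v),k)$ before forgetting). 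A secondary subtlety is the exclusion of the sporadic unstable cases $(g,n,a)=(1,1,(1-1/k))$ and the smallness conditions $k>4(g+1)$, which are needed so that Lemma~\ref{lem:forget} genuinely forces all non-rational graphs to die; these hypotheses I would simply invoke, having been arranged at the start of the section. Once the coefficients are verified, \eqref{for:intmain} follows by assembling the surviving terms.
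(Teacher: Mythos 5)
Your proposal follows essentially the same route as the paper's proof: apply \cite[Theorem 2.7]{SauFlat} to the vector $\overline{a}^k$, push forward along $\pi(a,k)$, invoke Lemma~\ref{lem:forget} to kill all but the two families of surviving graphs, and then match coefficients (the paper's equations for $\pi(a,k)_*\xi$, the comparison $\psi_n=\pi(a,k)^*\psi_n+\sum\delta_{\{n\}\cup I}$, and the split $R=R_1\sqcup R_2$ correspond exactly to your case analysis, with the multinomial count of distributions of forgotten markings cancelling the $N(a,k)!$ as you anticipate). The plan is correct and the points you flag as requiring care are precisely the ones the paper works through.
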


\begin{proof} We apply~\cite[Theorem 2.7]{SauFlat} for the vector $\overline{a}^k$. Then, we obtain a formula of the following type:
$$
(\xi+a_n\psi_n) = \underset{\text{compatible with $\overline{a}^k$}}{\sum_{\text{$\oGamma$, $k$-bi-colored graph}}} \text{contribution of $\oGamma$}.
$$
If we apply the push-forward along $\pi(a,k)$ to this formula, then the contribution of each graph in the sum is trivial unless it satisfies the conditions of Lemma~\ref{lem:forget}. If $R$ denotes the set of graphs satisfying these conditions, then we obtain the following formula
\begin{equation}
    \pi(a,k)_*(\xi+ka_n\psi_n) = \sum_{(\Gamma,b)\in R} \frac{m(\Gamma,b)k^{|E(\Gamma)|}}{|{\rm Aut}(\Gamma,b)|} \pi(a,k)_*\zeta_{(\Gamma,b),k\, *}[\PP\oOm_{(\Gamma,b)}(k)]. 
\end{equation}
Therefore, we need to describe the push-forward of each term of this formula to get~\eqref{for:intmain}. First, we remark that $\xi$ is the pull-back of $\xi$ along $\pi(a,k)$ so 
\begin{equation}\label{for:pushxi}
    \pi(a,k)_*\xi=N(a,k)! \xi.
\end{equation}
To compute the push-forward of $\psi_n$ we recall that 
\begin{equation}\label{for:pullpsi}
    \psi_n=\psi_n^*+ \underset{I\neq \emptyset}{\sum_{I\subset \{n+1,\ldots,n+N(a,k)\}}} \delta_{\{n\}\cup I}
\end{equation}
where $\delta_E$ is the divisor of curves with a genus 0 component carrying the markings in $E$.  The intersection of $\delta_{\{n\}\cup I}$ with $\PP\oOm_{g,n}(a,k)$ is the boundary stratum defined by the unique twist function that one can put of the stable graph defining $\delta_{\{n\}\cup I}$ (and the intersection is transverse). Then, we have
\begin{equation}\label{for:pushrational}
    \pi(a,k)_*\left(\delta_{\{n\}}\cap [\PP\oOm_{g,n}(a,k)]\right) =
\left\{\begin{array}{cl} 0 & \text{if $|I|>1$,} \\ {[}\PP\oOm_{g,n}(a[a_n+1/k],k){]} & \text{otherwise.}\end{array} \right.
\end{equation}
Combining~\eqref{for:pullpsi} and~\eqref{for:pushrational} we obtain
\begin{equation}\label{for:pushpsi}
    \pi(a,n)_*\psi_n=N(a,k)![\PP\oOm_{g,n}(a[a_n+1/k],k)].
\end{equation} 
Finally, we remark that the set $R$ splits into  $R_1\sqcup R_2$ according to the two possibilities in Lemma~\ref{lem:forget}. A stratum associated to a graph in $R_2$ is again the intersection of a class $\delta_{\{n,n+i\}}$ for some leg $i\in \{1,\ldots,N(a,k)\}$. Thus
\begin{equation}\label{for:pushR2}\sum_{(\Gamma,b)\in R_2} \frac{m(\Gamma,b)k^{|E(\Gamma)|}}{|{\rm Aut}(\Gamma,b)|} \pi(a,k)_*\zeta_{(\Gamma,b),k\, *}[\PP\oOm_{(\Gamma,b)}(k)] = N(a,k)!(ka_n+1)[\PP\oOm_{g,n}(a[a_n+1/k],k)].\end{equation}
If $(\Gamma',b')$ is a graph in $R_1$, then the image of $\PP\oOm_{(\Gamma',b')}(k)$ under  $\pi(a,k)$ is $\PP\oOm_{(\Gamma,b)}(k)$ for some $k$-rational hyperbolic graph  $(\Gamma,b)$ compatible with $a$. Moreover, the restriction of $\pi(a,k)$ to $\PP\oOm_{(\Gamma',b')}(k)$ is finite of degree $\prod_{v\in V_0} N(v)!$ (where $N(v)=k(2g(v)-2+n(v)-|b(v)|$ is the number of markings forgotten by $\pi(a,k)$). Conversely, given a rational hyperbolic graph  $(\Gamma,b)$, a graph in $R_1$ is uniquely determined by a partition of $N(a,k)$ into subsets of size $N(v)$ for each vertex. Therefore, we have
\begin{eqnarray}&&\label{for:pushR1}\!\!\!\!\!\!\!\!\!\!\!\!\!\!\!\!\!\! \sum_{(\Gamma',b')\in R_1} \frac{m(\Gamma',b')k^{|E(\Gamma)|}}{|{\rm Aut}(\Gamma',b')|} \pi(a,k)_*\zeta_{(\Gamma',b'),k\, *}[\PP\oOm_{(\Gamma',b')}(k)] \\ \nonumber &=&  \sum_{\Gamma\in {\rm Rat}_{g,n}}\sum_{b\in \Delta_{\Gamma}(a,k)} \frac{m(\Gamma,b)k^{|E(\Gamma)|}}{|{\rm Aut}(\Gamma)|} \frac{N(a,k)!}{\prod_{v\in V_0} N(v) !} \left(\prod_{v\in V_0} N(v)!\right) \zeta_{(\Gamma,b),k\, *}[\PP\oOm_{(\Gamma,b)}(k)]. 
\end{eqnarray}
Then formula~\eqref{for:intmain} is obtained by combining~\eqref{for:pushxi},~\eqref{for:pushpsi},~\eqref{for:pushR2},~\eqref{for:pushR2}.
\end{proof}

%%%%%%%%%%%%%%%%%%%%%%%%%%%%%%%%%%
%%%%%%%%%%%%%%%%%%%%%%%%%%%%%%%%%%
%%%%%%%%%%%%%%%%%%%%%%%%%%%%%%%%%%
\section{Growth of powers of the area form}\label{sec:growth}
%%%%%%%%%%%%%%%%%%%%%%%%%%%%%%%%%%
%%%%%%%%%%%%%%%%%%%%%%%%%%%%%%%%%%
%%%%%%%%%%%%%%%%%%%%%%%%%%%%%%%%%%

In this section, we complete the proof of Theorems~\ref{th:main1} and~\ref{th:main2}. First, we recall that the cohomology class $s_{g,n,d}(a,k)$ defined in the introduction via the area metric on $\mathcal{O}(1)$ is the Segre class of $\oOm_{g,n}(a,k)$. In particular $s_{g,n,0}(a,k)=1$, and $s_{g,n,0}=1$, thus formula~\eqref{formain:base} is valid.

\sskip

If $x$ is a real number then we denote by $\lfloor x \rfloor_k=\frac{1}{k} \lfloor kx\rfloor $. If $a$ is a real vector, and $k$ is a positive integer, then we denote by $\lfloor a \rfloor_k=(\lfloor a_1 \rfloor_k, \ldots, \lfloor a_n \rfloor_k)$. We will use the following lemma to prove formulas~\eqref{formain:small} and~\eqref{formain:der}.
\begin{lemma}\label{lem:growth} Let $f\colon \Delta^{\leq 2}_{g,n}\to H^*(\oM_{g,n},\QQ)$ be a continuous piece-wise polynomial. Let $g\colon\QQ\PP\Delta^{\leq 2}_{g,n}\to H^*(\oM_{g,n},\QQ)$  be a function on the set of rational pairs $\Delta^{\leq 2}_{g,n}$ such that $k(g(a,k)- f(a))$ is bounded. Then, the sequence of functions $f_k\colon \Delta^{\leq 2}_{g,n}\to H^*(\oM_{g,n},\QQ)$ defined by $f_k(a)=g(\lfloor a \rfloor_k,k)$ absolutely converges to $f$ as $k$ goes to infinity.
\end{lemma}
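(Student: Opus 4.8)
The plan is to control three sources of error separately: the replacement of $a$ by its rounding $\lfloor a\rfloor_k$, the replacement of $f(\lfloor a\rfloor_k)$ by $g(\lfloor a\rfloor_k,k)$, and — since the target is a fixed finite-dimensional vector space — to observe that ``absolute convergence'' just means convergence of each coordinate uniformly in $a$ once a basis of $H^*(\oM_{g,n},\QQ)$ is fixed and a norm $|\cdot|$ chosen. So I fix such a norm and argue that $\sup_{a\in\Delta^{\leq 2}_{g,n}}|f_k(a)-f(a)|\to 0$.

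First I would write, for $a\in\Delta^{\leq 2}_{g,n}$ and $k$ large enough that $\lfloor a\rfloor_k$ still lies in $\Delta^{\leq 2}_{g,n}$ (which holds since rounding down only decreases $a_i$, hence decreases $|a|$ and keeps $a_i<1/2$; this needs a word because $a_n\le 2$ is preserved and $\lfloor a_n\rfloor_k\le a_n$),
\begin{equation}
f_k(a)-f(a) = \bigl(g(\lfloor a\rfloor_k,k)-f(\lfloor a\rfloor_k)\bigr) + \bigl(f(\lfloor a\rfloor_k)-f(a)\bigr).
\end{equation}
The first bracket is bounded by $C/k$ for a constant $C$ independent of the rational pair, directly by the hypothesis that $k(g(\cdot,k)-f(\cdot))$ is bounded (here I must take the bound to be uniform over all rational pairs in $\Delta^{\leq 2}_{g,n}$; I would make that the precise reading of the hypothesis). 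For the second bracket I use that $f$ is a continuous piecewise polynomial on a bounded set: a continuous piecewise-polynomial function on a bounded subset of $\RR^n$ is Lipschitz, so there is $L$ with $|f(b)-f(b')|\le L|b-b'|$ for all $b,b'$; and $|\lfloor a\rfloor_k - a|\le \sqrt{n}/k$ coordinatewise. Hence $|f(\lfloor a\rfloor_k)-f(a)|\le L\sqrt{n}/k$.

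Combining, $\sup_{a}|f_k(a)-f(a)|\le (C + L\sqrt n)/k\to 0$, which is the claimed absolute (uniform) convergence. The step I expect to require the most care is the Lipschitz bound on $f$: ``piecewise polynomial'' must include that the pieces are the closed cells of a locally finite polyhedral subdivision and that $f$ agrees with a single polynomial on each closed cell (so that continuity across walls gives a genuine global Lipschitz constant from the finitely many polynomial pieces meeting the bounded region). Once that is granted — and it is part of the intended meaning of ``continuous piecewise polynomial'' here, and in the applications $f=s_{g,n}$ will be built from finitely many polynomial pieces by the inductive formulas of Theorem~\ref{th:main2} — the rest is the elementary two-term estimate above. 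A minor point to check along the way is that for the finitely many $a$ with $\lfloor a\rfloor_k\notin\Delta^{\leq 2}_{g,n}$ (a measure-zero boundary effect that in fact does not occur given the sign of the rounding) no issue arises; I would simply note rounding down keeps us inside the domain, so there is nothing to fix.
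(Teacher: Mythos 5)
Your proof is correct and follows essentially the same route as the paper's: the paper likewise decomposes $f_k(a)-f(a)$ into a Lipschitz term $f(a)-f(\lfloor a\rfloor_k)$ bounded by $K/k$ and a term bounded by $K'/k$ from the hypothesis, then applies the triangle inequality. Your additional remarks (that rounding down stays inside $\Delta^{\leq 2}_{g,n}$ and that the Lipschitz bound requires the pieces to form a finite polyhedral subdivision) are reasonable elaborations of points the paper leaves implicit.
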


\begin{proof} The function $f$ is $K$-Lipschitz for a positive constant $K$. Besides, there exists $K'>$ such that  $\lVert f(a)-g(a,k)\rVert< K'/k$, so 
$$
\lVert f(a)-g(a,k)\rVert =\lVert (f(a)-f(\lfloor a \rfloor_k))+(\lfloor a \rfloor_k-g(a,k))\rVert < (K+K')/k.$$
\end{proof}

\subsection{Small angles} Let $(a,k)$ be a rational pair of $\Delta_{g,n}^{\leq 1/2}$. The space $\oOm_{g,n}(a,k)$ is isomorphic to the vector bundle over $\oM_{g,n}$ with fibers $H^0(C,\omega_{\rm log}^{\otimes k}(ka_1x_1+\ldots+ka_nx_n))$. The first cohomology group $H^1(C,\omega_{\rm log}^{\otimes k}(ka_1x_1+\ldots+ka_nx_n))$ is trivial, so the Segre class of  $\oOm_{g,n}(a,k)$ can be computed by applying the Grothendieck-Riemann-Roch formula. We recall from ~\cite{Bin} that the Chern characters of $\oOm_{g,n}(a,k)$ are given by 
\begin{equation}
    {\rm ch}_d\left(\oOm_{g,n}(a,k)\right)= \frac{B_{d+1}(k)}{(d+1)!} \kappa_1 - \sum_{i=1}^n \frac{B_{d+1}(ka_i)}{(d+1)!} \psi_i + \frac{1}{(d+1)!} \delta_d
\end{equation}
where $\delta_d$ is a boundary term that does not depend on $k$ or $a$, and $B_d$ is the Bernoulli polynomial. We recall that $B_2=x^2-x+1/6$ and ${\rm deg}\, B_d=d$. For a fixed $a$, we see that ${\rm ch}_d$ is a polynomial of degree $d+1$ in $k$. In particular, as $k$ goes to infinity the Chern characters ${\rm ch}_d$ for $d\geq 2$ do not contribute to the highest degree term in $k$ of the Segre class, so
\begin{eqnarray}
    \nonumber s_{g,n,d}(a,k) = s_d\left(\oOm_{g,n}(a,k)\right) &=& \frac{(-1)^d}{d!}\left(c_1\left(\oOm_{g,n}(a,k)\right)\right)^d + O_{\Delta_{g,n}^{\leq 1/2}}(k^{2d-1}) \\ 
    &=& \frac{k^{2d}}{d!}\left(\frac{1}{2}\kappa_1 - \sum_{i=1}^n \frac{a_i^2}{2} \psi_i\right)^d + O_{\Delta_{g,n}^{\leq 1/2}}(k^{2d-1}),
\end{eqnarray}
where the notation $O_E(k^\ell)$ stands for a function $g\colon \QQ\PP(E)\to H^*(\oM_{g,n},\QQ)$ with norm bounded by $Ck^{\ell}$ for some constant $C$ (and $V$ is an implicitly defined vector bundle, here $H^*(\oM_{g,n},\QQ)$).  Together with Lemma~\ref{lem:growth}, this estimate implies~\eqref{formain:small} of Theorem~\ref{th:main2}.

\subsection{Expression of the derivatives} We fix a triple $(g,n,d)$. We will show that 
$$k^{2g+1}s_{g',n',d'}(a,k)-ks_{g,n,d)}(a)$$ is bounded on $\QQ\PP(\Delta_{g,n}^{\leq 2)}$, where $s_{g,n,d}$ is defined by the relations of Theorem~\ref{th:main2}. We work by induction, so we assume that this holds for all triples $(g',n',d')$  such that $2g'-2+n'<2g-2+n$ or $(g',n')=(g,n)$ and $d'<d$. The bases cases $(g,n,d)=(0,3,0)=(1,1,0)$ have already been treated. 

\sskip

Let $(a,k)$ be a rational pair of $\Delta_{g,n}^{<2}$. Let $d\geq 0$. We multiply formula~\eqref{for:intmain}  by $\xi^{r+d-1}$ and push-forward the result along  $p:\PP\oOm_{g,n}(a',k)\to\oM_{g,n}$. This way, we obtain the following relation
\begin{eqnarray}
    \nonumber s_{g,n,d}(a,k)+ka_n\psi_n s_{g,n,d-1}(a,k) &=& s_{g,n,d}(a',k) \\ && +  \sum_{\Gamma\in {\rm Rat}_{g,n}}\sum_{b\in \Delta_{\Gamma}(a,k)} \frac{m(\Gamma,b)k^{|E(\Gamma)|}}{|{\rm Aut}(\Gamma)|} \zeta_{\Gamma *} s_{d-|E(\Gamma)|}\left(\oOm_{(\Gamma,b)}(k)\right).
\end{eqnarray}
In the sum, the space $\oOm_{(\Gamma,b)}(k)$ is considered as a cone over the stratum of the moduli space of curves $\oM_{\Gamma}$ and the Segre class is given by
\begin{equation}
    s_{d}\left(\oOm_{(\Gamma,b)}(k)\right)=\underset{|\underline{d}|=d}{\sum_{\underline{d}=(d_v)_{v\in V^{\rm out}}}} \left(\bigotimes_{v\in V^{\rm out}} s_{d_v}\left(\oOm_{g(v),n(v)}(b(v),k)\right)\right).
\end{equation}
Therefore, if $(a,k)$ is a rational pair of $\Delta_{g,n}^{\leq 2}$ then
\begin{eqnarray}
    \nonumber s_{g,n,d}(a,k)&=& s_{g,n,d}(a[0],k)+  \sum_{0\leq \ell\leq ka_n-1}s_{g,n,d}(a[(\ell+1)/k],k) - s_{g,n,d}(a[\ell/k],k)\\
    \label{for:intd}&=&  s_{g,n,d}(0,k) +  \sum_{0\leq \ell\leq ka_n-1} \ell \psi_{n}s_{g,n,d-1} \\
    \nonumber&& + \underset{0\leq \ell\leq ka_n-1}{\sum_{\Gamma\in {\rm Rat}_{g,n}}}\sum_{b\in \Delta_{\Gamma}(a[\ell/k],k)} \zeta_{\Gamma *} s_{d-|E(\Gamma)|}\left(\oOm_{(\Gamma,b)}(k)\right).
\end{eqnarray}
We have already shown that 
$$s_{g,n,d}(a[0],k)=k^{2d}e_{g,n,d}(a[0])+O_{\Delta_{g,n}^{\leq 1/2}}(k^{2d-1}).$$
Besides, by induction hypothesis 
\begin{eqnarray*}
     \frac{1}{k^{2d}}\sum_{0\leq \ell\leq ka_n-1} \ell \psi_{n}s_{g,n,d-1}(a,k) &=&  \frac{1}{k^2}\sum_{0\leq \ell\leq ka_n-1} \ell \psi_{n}s_{g,n,d-1}(a) + O_{\Delta_{g,n}^{\leq 2}}(k^{-1})\\
     &=& \int_{t=0}^{a_n} t\, \psi_ns_{g,n,d-1}(a[t])\, dt. 
\end{eqnarray*}
To control the sum in the RHS of~\eqref{for:intd}, we start by fixing a rational graph $\Gamma$. Then, by induction hypothesis, we have
\begin{eqnarray*}
    &&\frac{1}{k^2d-1}\sum_{b\in \Delta_{\Gamma}(a[\ell],k)} m(\Gamma,b)k^{|E(\Gamma)|}\zeta_{\Gamma *} s_{d-|E(\Gamma)|}\left(\oOm_{(\Gamma,b)}(k)\right) \\
    &=& \frac{1}{k^{|E(\Gamma)|-1}} \sum_{\underline{d}\vdash d-|E(\Gamma)|}\sum_{b\in \Delta_{\Gamma} (a,k)} m(\Gamma,b) \zeta_{\Gamma *}\left(\bigotimes_{v\in V^{\rm out}} s_{g(v),n(v),d_v}(b(v))\right) + O_{\Delta_{g,n}^{\leq 1/2}}(k^{-1})\\
    &=& \int_{b\in\Delta_{\Gamma}} \zeta_{\Gamma *}\left(\bigotimes_{v\in V^{\rm out}} s_{g(v),n(v),d_v}(b(v))\right)\, db  + O_{\Delta_{g,n}^{\leq 1/2}}(k^{-1}).
\end{eqnarray*}
Altogether, we obtain the following expression 
\begin{eqnarray*}
    k^{-2d}s_{g,n,d}(a,k) &=& e_{g,n,d}(a[0])+ \int_{t=0}^{a_n} t\, \psi_ns_{g,n,d-1}(a[t])\, dt  \\
    && -  \underset{\underline{d}\vdash d-|E(\Gamma)|}{\sum_{\Gamma \in {\rm Rat}_{g,n}}} \int_{t=0}^a \int_{b\in \Delta_{\Gamma}(a[t])} \frac{m(\Gamma,b)}{|{\rm Aut}(\Gamma)|} \zeta_{\Gamma *}\left(\bigotimes_{v\in V^{\rm out}} s_{g(v),n(v),d_v}(b(v))  \right) \, db \\&& +  O_{\Delta_{g,n}^{\leq 1/2}}(k^{-1}).
\end{eqnarray*}
Therefore $k^{-2d}s_{g,n,d}(a,k)$ converges uniformly and the limit $s_{g,n}$ satisfies~\eqref{formain:der}. This completes the proof of theorems~\ref{th:main1} and~\ref{th:main2}.

%%%%%%%%%%%%%%%%%%
%%%%%%%%%%%%%%%%%%
\section{Identities at integral singularities}\label{sec:WK}
%%%%%%%%%%%%%%%%%%
%%%%%%%%%%%%%%%%%%

In this section, we study the volume functions in the presence of integral coordinates. We recall from~\cite{SauFlat} that $\int_{\oM_{g,n}} (-1)^{g-1+n}s_{g,n,3g-3+n}(a,k)=0$ is of the same sign as $\prod_{i=1}^n {\rm sin}(a_i\pi)$. In particular, if $a_n$ is integral, then $V_{g,n}(a)=0$. As shown in Section~\ref{sec:ind}, the function $V_{g,n}$ is $C^1$. This implies that the function ${\rm Vol}_{g,n}$ is continuous and non-negative, thus completing the proof of Theorem~\ref{th:volgn}.

\sskip

The rest of the section will be dedicated to the proof of Theorem~\ref{th:kdv}. Let $(g,n)\neq (0,3)$ be a pair such that $n\geq 1$, and let $a$ be a vector in $\Delta_{g,n}^{\leq 1/2}$. Theorem~\ref{th:kdv} will be deduced from the identity $V_{g,n+1}(a_1,\ldots,a_n,2)=0$. 

\subsection{Expression of $V_{g,n+1}$ in terms of Mirzakhani polynomials} Our first task is to write $V_{g,n+1}$ in terms of Mizakhani polynomials using the closed expression of $s_{g,n}$ proved in Section~\ref{sec:ind}. For all $\ell\geq 0$, and $0\leq t\leq 2$ we set 
\begin{eqnarray*}
    V_{g,n+1}^\ell(a,t)&\coloneqq&\int_{\oM_{g,n+1}} \psi_n^\ell s_{g,n+1}(a_1,\ldots,a_n,t), \text{ and }\\
    P_{g,n+1}^\ell(a,t)&\coloneqq&\int_{\oM_{g,n+1}} \psi_n^\ell c_{g,n+1}(a_1,\ldots,a_n,t).
\end{eqnarray*}
With this convention we have $V_{g,n+1}^0=(-1)^{g+n}V_{g,n+1}$ while $P_{g,n+1}^0=P_{g,n+1}$. We have
\begin{eqnarray}
    V_{g,n+1}^{\ell}(a,t)&=& P_{g,n+1}^\ell(a,t) \text{ if $\ell\geq 2$},\\
    V_{g,n+1}^1(a,t)&=& P_{g,n+1}^1(a,t)-\int_{u=0}^t\mathcal{D}^4(a,u)\,  du\\
    V_{g,n+1}^0(a,t)&=&  \int_{u=0}^t \left(uV_{g,n+1}^1(a,u)- \mathcal{D}^3(a,u)\right)\, du,
\end{eqnarray}
where $\mathcal{D}^4(a,u)=\int_{\oM_{g,n+1}} D^{4,1}_{g,n+1}$$(a,u) du $ and $\mathcal{D}^3(a,u)\int_{\oM_{g,n+1}}D^{3}_{g,n+1}(a,u)$ (the classes $D^{3}_{g,n+1}$ and $D^{4,\ell}_{g,n+1}$ are defined in Section~\ref{sec:ind}). The integral of $D_{g,n+1}^{4,0}$ vanishes, thus does not appear in the expression of $V_{g,n+1}$. Indeed, the contribution of a rational graph with a central vertex with 4 half-edges to $D^{4,0}_{g,n+1}$ is defined as the push-forward of classes supported only on the outer vertices, so the top cohomological degree of this class is trivial.
\sskip

We use the expression of the $D$-functions given at Section~\ref{sec:ind} to re-write the RHS of these formulas with Mirzakhani polynomials. As $\int_{\oM_{0,4}}\psi_n=1$, we have
\begin{eqnarray*}
     \nonumber \mathcal{D}^4(a,u)\!\! &=& \!\! \sum_{1\leq i<j<n+1} (a_i+a_j+u-2)^+ P_{g,n-1}(\ldots,\widehat{a_i},\ldots,\widehat{a_j},\ldots,a_i+a_j+u-2) \\
    \label{for:D4}&& \!\! + \sum_{1\leq i<n+1}  \int_{y=0}^{(a_i+u-2)^+} \frac{y (a_i+u-2-y)}{2} P_{g-1,n+1}(\ldots,\widehat{a_i},\ldots,y,u+a_i-2-y)\, dy\\
    \nonumber&& \!\! + \sum_{1\leq i<n+1} \underset{I_1\sqcup I_2=\{1,\ldots,n\}\setminus \{i\}} {\sum_{g_1+g_2=g}}   \int_{y=0}^{(a_i+u-2)^+} \frac{y (a_i+u-2-y)}{2}  \\
    \nonumber&&\,\,\,\,\,\,\,\,\,\, \,\,\,\,\,\,\,\,\,\,\,\,\,\, \,\,\,\,\,\,\,\,\,\,\,\,\,\, \,\,\,\,
    P_{g_1,|I_1|+1}(y,\{a_i\}_{i\in I_1}) \times P_{g_2,|I_2|+1}(a_i+u-2-y,\{a_i\}_{i\in I_2})\, dy.
\end{eqnarray*}
This expression determines $V_{g,n+1}^1$ in terms of the $P$-functions. If $u\leq 1$, then these sums are trivial, so $V^1_{g,n+1}(a,t)=P^1_{g,n+1}(a,t)$ if $t\leq 1$.
\begin{eqnarray}\nonumber
     \mathcal{D}^3(a,u)\!\! &=& \!\! \sum_{1\leq i<n+1} (a_i+u-1)^+ V^0_{g,n}(\ldots,\widehat{a_i},\ldots,a_i+u-1) \\
    \label{for:D3}&& \!\! +  \int_{y=0}^{(u-1)^+} \frac{y (u-1-y)}{2} V^0_{g-1,n+2}(\ldots,\widehat{a_i},\ldots,y,u-1-y)\, dy\\
    \nonumber&& \!\! + \underset{I_1\sqcup I_2=\{1,\ldots,n\}} {\sum_{g_1+g_2=g}}     \int_{y=0}^{(u-1)^+} \frac{y (u-1-y)}{2}  \\
    \nonumber&&\,\,\,\,\,\,\,\,\,\, \,\,\,\,\,\,\,\,\,\,\,\,\,\, \,\,\,\,\,\,\,\,\,\,\,\,\,\, \,\,\,\,
    V^0_{g_1,|I_1|+1}(y,\{a_i\}_{i\in I_1}) \times V^0_{g_2,|I_2|+1}(u-1-y,\{a_i\}_{i\in I_2})\, dy.
\end{eqnarray}
Here, the function $V$-functions are evaluated at vectors with coordinates at most $3/2$ for the first term at most and $1$ for the others. Besides, if $u\leq 1$, then only the first sum is non-trivial. In particular, we have the following expression for $t\leq 1$:
\begin{equation}\label{for:VP1}
    V^0_{g,n+1}(a,t)=P_{g,n+1}(a,t)-\sum_{i=1}^n \int_{u=0}^{(t+a_i-1)^+} u P_{g,n}(\ldots, \widehat{a_i},\ldots,u)\, du.
\end{equation}
We use this expression, and $V^0_{g,n+1}(a,1)=0$ to re-prove the following result by Do--Norbury~\cite{DoNor}.
\begin{theorem}
    For all $a\in \RR^n$ we have
    \begin{equation}
    P_{g,n+1}(a,t)=\sum_{i=1}^n \int_{u=0}^{a_i} u P_{g,n}(\ldots, \widehat{a_i},\ldots,u)\, du.
\end{equation}
\end{theorem}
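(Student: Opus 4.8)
The plan is to prove the Do--Norbury identity by combining two facts established above: the explicit formula \eqref{for:VP1} for $V^0_{g,n+1}(a,t)$ valid for $t\le 1$, and the vanishing $V^0_{g,n+1}(a,1)=0$, which is a special case of Theorem~\ref{th:volgn} (the vanishing at an integral coordinate, here $a_{n+1}=1$). Note that throughout we work with $a\in\Delta_{g,n}^{\le 1/2}$, so all arguments appearing below stay in the polynomiality domain where the formulas are valid.

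First I would evaluate \eqref{for:VP1} at $t=1$. Since each $a_i<1/2$ we have $t+a_i-1 = a_i - (1-t) < a_i$, and at $t=1$ this becomes exactly $(1+a_i-1)^+ = a_i$ (positive since $a_i\ge 0$; the case $a_i=0$ gives an empty integral, consistent with the $P_{g,n}$ term). Hence
\begin{equation*}
0 = V^0_{g,n+1}(a,1) = P_{g,n+1}(a,1) - \sum_{i=1}^n \int_{u=0}^{a_i} u\,P_{g,n}(\ldots,\widehat{a_i},\ldots,u)\,du,
\end{equation*}
which gives the claimed identity at the single point $t=1$ of the last variable, i.e. $P_{g,n+1}(a,1)=\sum_i\int_0^{a_i}uP_{g,n}(\ldots,\widehat{a_i},\ldots,u)\,du$.

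The remaining step is to promote this from the value $t=1$ to all $t\in\RR^n$ (the theorem is stated for $a\in\RR^n$, with $t$ the last coordinate). Here I would use that $P_{g,n+1}$ is a genuine polynomial in all its variables (it is defined by an integral of tautological classes, namely $P_{g,n+1}=e_{g,n+1}$-integral), and likewise the right-hand side $\sum_i\int_0^{a_i}uP_{g,n}(\ldots,\widehat{a_i},\ldots,u)\,du$ is polynomial in $(a,t)$ after integration. Both sides, as polynomials, depend on the last coordinate; but the argument above in fact works for \emph{every} $t$ in the open interval where $\Delta_{g,n+1}^{\le 1/2}$-type estimates apply — more precisely, \eqref{for:VP1} holds for all $t\le 1$ with $(a,t)$ having all coordinates $<1/2$ except possibly being pushed to $t\le 1$, and one also has the symmetric statement obtained by permuting which variable plays the role of the ``last'' one. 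Since the claimed identity between polynomials holds on a set with nonempty interior (an open subset of $a$-space together with the continuum of values $t\le 1$), the two polynomials agree identically on $\RR^{n}\times\RR$.

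The main obstacle is the bookkeeping in the second step: one must be careful that \eqref{for:VP1} is only asserted for $t\le 1$, and that the functions $V^0_{g,n}$ appearing in intermediate formulas are evaluated at vectors with coordinates bounded by $3/2$ or $1$; so to get the polynomial identity on all of $\RR^n$ one cannot simply ``plug in large $t$'' but must invoke polynomiality of both sides plus agreement on a set of full dimension. Once that principle is invoked the conclusion is immediate, so the real content is entirely in the evaluation at $t=1$ together with the vanishing $V^0_{g,n+1}(a,1)=0$ from Theorem~\ref{th:volgn}.
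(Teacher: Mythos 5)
Your first step is exactly the paper's proof: the paper obtains this theorem in one line by evaluating \eqref{for:VP1} at $t=1$, where the upper limit $(t+a_i-1)^+$ becomes $a_i$, and invoking the vanishing $V^0_{g,n+1}(a,1)=0$ from Theorem~\ref{th:volgn}; the passage from $a\in\Delta_{g,n}^{\leq 1/2}$ to all of $\RR^n$ is by polynomiality in $a$, as you say. However, your second step --- ``promoting'' the identity from $t=1$ to all values of the last coordinate --- is both unnecessary and incorrect. The right-hand side of the stated identity does not depend on $t$ at all, so the identity can only hold at an isolated value of the last variable; the statement is to be read as the evaluation $P_{g,n+1}(a,1)$ (the Do--Norbury point, cone angle $2\pi$). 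Your claim that ``the argument above in fact works for every $t$'' in an interval fails: for $t<1$ the vanishing is not available, and \eqref{for:VP1} instead expresses the generally nonzero quantity $V^0_{g,n+1}(a,t)$ as the difference of the two sides, so the two polynomials do \emph{not} agree on an open set of $(a,t)$-space. The only extension genuinely required is in the $a$-variables, and for that your appeal to agreement of two polynomials on a nonempty open subset of $\RR^n$ is correct and is what the paper implicitly uses.
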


Here, we will use~\eqref{for:VP1} to express $\mathcal{D}^3$ in terms of Mirzakhani polynomials. To do so, we denote by $\widetilde{\mathcal{D}}^3(a,u)$ the expression obtained by replacing $V$ by $P$ in the RHS of~\eqref{for:D3}. Then, we set
\begin{eqnarray*}
     \mathcal{D}'(a,t)&\coloneqq & \sum_{1\leq i<n+1}\Bigg( \underset{j\neq i}{\sum_{1\leq j<n+1}} \int_{u=0}^{(a_i+a_j+t-2)^+}\!\!\! u(a_i+t-1) P_{g,n-1}(\ldots,\widehat{a_i},\ldots,\widehat{a_j},\ldots,u)\, du \\
     && \,\,\,\,\,\,\,\,\,\,\,\, +  \int_{u=0}^{(a_i+t-2)^+} \int_{y=0}^{(u-1)^+}\!\!\! \frac{y(u-y)}{2}(a_i+t-1)\\
    &&\,\,\,\,\,\,\,\,\,\, \,\,\,\,\,\,\,\,\,\,\,\,\,\, \,\,\,\,\,\,\,\,\,\,\,\,\,\, \,\,\,\,
    P_{g-1,n+1}(\ldots,\widehat{a_i},\ldots,\ldots,y,u-y)\,dy\, du \\
    && \,\,\,\,\,\,\,\,\,\,\,\, + \underset{I_1\sqcup I_2=\{1,\ldots,n\}} {\sum_{g_1+g_2=g}}  \int_{u=0}^{(a_i+t-2)^+} \int_{y=0}^{(u-1)^+}\!\!\! \frac{y(u-y)}{2}(a_i+t-1)\\
    &&\,\,\,\,\,\,\,\,\,\, \,\,\,\,\,\,\,\,\,\,\,\,\,\, \,\,\,\,\,\,\,\,\,\,\,\,\,\, \,\,\,\,
    P_{g_1,|I_1|+1}(y,\{a_i\}_{i\in I_1}) \times P_{g_2,|I_2|+1}(u-y,\{a_i\}_{i\in I_2})\,dy\, du \Bigg)\\
    && \!\! + \sum_{1\leq i<n+1}  \int_{y=0}^{(t-1)^+} \int_{u=0}^{(y+a_i-1)^+} y u (t-1-y) P_{g-1,n+1}(\ldots,\widehat{a_i},\ldots,u,t-1-y)\, du \, dy\\
    && \!\! + \underset{I_1\sqcup I_2=\{1,\ldots,n\}} {\sum_{g_1+g_2=g}}   \sum_{i \in I_1}  \int_{y=0}^{(t-1)^+} \int_{u=0}^{(y+a_i-1)^+} y u (t-1-y)  \\
    &&\,\,\,\,\,\,\,\,\,\, \,\,\,\,\,\,\,\,\,\,\,\,\,\, \,\,\,\,\,\,\,\,\,\,\,\,\,\, \,\,\,\,
    P_{g_1,|I_1|+1}(u,\{a_i\}_{i\in I_1}) \times P_{g_2,|I_2|+1}(t-1-y,\{a_i\}_{i\in I_2})\, du \, dy,
\end{eqnarray*}
and  
\begin{equation}
    \mathcal{D}''(a,t)\coloneqq  \sum_{1\leq i<j<n+1} \int_{y=1-a_i-a_j}^{{\max}(t-1,1-a_i-a_j)} y(t-1-y)  P_{g,n-1}(\ldots,\widehat{a_i},\ldots,\widehat{a_j},\ldots,t-1-y)\, dy.
\end{equation}
With this notation, we have
\begin{equation}
    \mathcal{D}^{3}(a,t)=\widetilde{\mathcal{D}}^3(a,t)-\mathcal{D}'(a,t)-\mathcal{D}''(a,t).
\end{equation}
To explain this last expression, note that the term $\mathcal{D}'$ corresponds to the correction between the $V$ and $P$ function given by~\eqref{for:VP1}. The term $\mathcal{D}''$ has another explanation: in the third sum of~\eqref{for:D3}, if $g_1=0$ and $I_1=\{i,j\}$, then the integration domain includes values of $y$ such that $y+a_i+a_j\geq 1$ while $\Delta_{0,3}$ is the set of vectors of size at most $1$. To make sense of this expression, we must impose that $V_{0,3}(a)=0$ if $|a|>1$ while $P_{0,3}(a)=1$. The term $\mathcal{D}''$ corresponds to the correction obtained by integration on the complement of $\Delta_{0,3}$. Altogether, we obtain the following relation
\begin{equation}\label{for:VP2}
    V^0_{g,n+1}(a,2)=0=P_{g,n+1}(a,2) - \int_{t=0}^2 \mathcal{D}^{3}(a,t)-  \int_{t=0}^2\int_{u=0}^t t\mathcal{D}^4(a,u)\,du\, dt.
\end{equation}
The RHS of this identity is a linear combination of polynomials in $a$ constructed from Mirzakhani polynomials.

\subsection{Odd part of the relation}  In order to prove Theorem~\ref{th:kdv}, we simplify the identity~\eqref{for:VP2} by extracting the monomials with odd powers in $a_1$. If $P$ is a polynomial in $a_1,\ldots,a_n$, then we denote 
\begin{equation*}
    P^{\rm odd}=\frac{1}{4}\bigg(\! P(a_1,a_2,\ldots,a_n)+P(a_1,-a_2,\ldots,-a_n)- P(-a_1,a_2,\ldots,a_n)-P(-a_1,-a_2,\ldots,-a_n)\!\bigg)
\end{equation*}
the odd part in $a_1$ of the odd degree part of $P$. Mirzkhani polynomials are even in all variables, so~\eqref{for:VP2} implies
\begin{equation}\label{for:VP2odd}
    \left(\int_{t=0}^2 \widetilde{\mathcal{D}}^{3}(a,t) dt\right)^{\rm odd}= \left(\int_{t=0}^2 \mathcal{D}'(a,t)+\mathcal{D}''(a,t) -t \int_{u=0}^t \mathcal{D}^4(a,u)\, du    \, dt\right)^{\rm odd}
\end{equation} 
The parity of Mirzakhani polynomials  also implies the following simple expression of the LHS:
\begin{eqnarray}
    \nonumber \left(\int_{t=0}^2 \widetilde{\mathcal{D}}^{3}(a,t)\, dt\right)^{\rm odd} &=& \left(\int_{t=0}^{2} (a_i+t-1)^+ P_{g,n}(a_1+t-1,a_2\ldots,a_n)\, dt\right)^{\rm odd} \\
    \nonumber&=& \left(\int_{t=0}^{a_i+1} t P_{g,n}(t,a_2\ldots,a_n)\, dt\right)^{\rm odd}\\
    \label{for:dtildeodd} &=& \int_{t=1-a_1}^{1+a_1} t P_{g,n}(t,a_2\ldots,a_n) \, dt.
\end{eqnarray}
Indeed, the other terms are even in the variable $a_1$. 
We introduce notation to re-group the terms in the RHS of~\eqref{for:VP2odd}.
For all $2\leq i\leq n$, we set
\begin{eqnarray*}
    \nonumber\mathcal{D}_{\{1,i,n+1\}}(a)&\coloneqq &  \int_{t=0}^2 \int_{u=0}^{(a_1+a_i+t-2)^+}\!\!\! u\left((a_1+t-1)+(a_i+t-1)\right) P_{g,n-1}(u,\ldots,\widehat{a_i},\ldots)\, du \, dt\\
    && + \int_{t=0}^2 \int_{y=1-a_1-a_j}^{{\max}(t-1,1-a_1-a_i)} y(t-1-y)  P_{g,n-1}(t-1-y,\ldots,\widehat{a_i},\ldots)\, dy \, dt\\
    \nonumber && - \int_{t=0}^2  \int_{u=0}^{t}\!\!\! t(a_1+a_i+u-2)^+ P_{g,n-1}(a_1+a_i+u-2,\ldots,\widehat{a_i},\ldots)\, du \, dt\\
     &=& \int_{t=0}^{a_1+a_i} \int_{u=0}^{t} \left(u(2t-a_1-a_i+2) + (t-u+1-a_1-a_i)u- u(t+2-a_1-a_i) \right)\\
    && \,\,\,\,\,\,\,\,\,\,\,\,\,\,\,\,\,\,\,\,\,\,\,\,\,\,\,\,\,\,\,\,\,\,\,\,\,\,\,\,\,\,\,\,\,\,\,\,\,\,\,\,\,\,\,\,\,\,\,\,\,\,\,\,\,\,\,\,\,\,\,\,\,\,\,\,\,\,\,\,\,\,\,\,\,\,\,\,\,\,\,\,\,\,\,\,\,\,\,\,\,\,\,\,\,\,\,\,\,\,\,\,\,\,\,\,\,\,\,\, P_{g,n-1}(u,\ldots,\widehat{a_i},\ldots) \, du \, dt
\end{eqnarray*}
From the first expression to the second, we have made the change of variable $t\mapsto t+2-a_1-a_i$, and then $y\mapsto t-u+1-a_1-a_i$ in the second integral, and $u\mapsto u+2-a_1-a_i$ in the third one. With this notation, we have
\begin{equation}\label{for:Diodd}
    \mathcal{D}_{\{1,i,n+1\}}(a)^{\rm odd}= \int_{t=a_i-a_1}^{a_1+a_i} \int_{u=0}^{t} u P_{g,n-1}(u,\ldots,\widehat{a_i},\ldots) \, du \, dt
\end{equation}
\begin{eqnarray*}
    \mathcal{D}_{\{1,n+1\}}^{\rm loop}(a)\!\!\! &\coloneqq & \!\! \int_{t=0}^2 \int_{y=0}^{(t-1)^+} \int_{u=0}^{(y+a_1-1)^+} y u (t-1-y) P_{g-1,n+1}(\ldots,u,t-1-y)\, du \, dy\, dt\\ 
    && \!\! + \int_{t=0}^2 \int_{u=0}^{(a_1+t-2)^+} \int_{y=0}^{u} \frac{y(u-y)}{2}(a_1+t-1) P_{g-1,n+1}(\ldots,y,u-y)\, dy\,du\,dt \\
    && \!\! -\int_{t=0}^2 \int_{u=0}^t \int_{y=0}^{(a_1+u-2)^+} \frac{ty (a_1+u-2-y)}{2} P_{g-1,n+1}(\ldots,y,u+a_1-2-y)\, dy\,du\,dt \\
    %&=& \!\! \int_{t=2-a_1}^2 \int_{y=1-a_1}^{(t-1)} \int_{u=0}^{(y+a_1-1)} y u (t-1-y) P_{g-1,n+1}(a_2,\ldots,u,t-1-y)\, du \, dy\, dt\\ 
    %&& \!\! -\int_{t=2-a_1}^2 \int_{u=2-a_1}^t \int_{y=0}^{(a_1+u-2)} \frac{ty (a_1+u-2-y)}{2} P_{g-1,n+1}(a_2,\ldots,y,u+a_1-2-y)\, dy\,du\,dt \\
     &=& \!\! \int_{t=0}^{a_1} \int_{y=0}^{t} \int_{u=0}^{y} (y+1-a_1) u (t-y) P_{g-1,n+1}(\ldots,u,t-y)\, du \, dy\, dt\\ 
    % && \!\! -\int_{t=0}^{a_1} \int_{u=0}^t \int_{y=0}^{u} \frac{(t+2-a_1)y (u-y)}{2} P_{g-1,n+1}(a_2,\ldots,y,u-y)\, dy\,du\,dt \\ 
    %&=& \!\! \int_{t=0}^{a_1} \int_{y=0}^{t} \int_{u=0}^{y} (t-y+1-a_1) u y P_{g-1,n+1}(a_2,\ldots,u,y)\, du \, dy\, dt\\ 
    && \!\! + \int_{t=0}^{a_1} \int_{u=0}^{t} \int_{y=0}^{u} \frac{y(u-y)}{2}(t+1) P_{g-1,n+1}(\ldots,y,u-y)\, dy\,du\,dt \\
    && \!\! -\int_{t=0}^{a_1} \int_{u=0}^t \int_{y=0}^{u} \frac{(t+2-a_1)y (u-y)}{2} P_{g-1,n+1}(\ldots,y,u-y)\, dy\,du\,dt \\ 
    &=& \!\! \int_{t=0}^{a_1} \int_{u=0}^{t} \int_{y=0}^{u} \left(t-y+1-a_1+\frac{a_1-1}{2}\right) (u-y) y \\
    &&\,\,\,\,\,\,\,\,\,\,\,\,\,\,\,\,\,\,\,\,\,\,\,\,
    \,\,\,\,\,\,\,\,\,\,\,\,\,\,\,\,\,\,\,\,\,\,\,\,
    \,\,\,\,\,\,\,\,\,\,\,\,\,\,\,\,\,\,\,\,\,\,\,\,P_{g-1,n+1}(\ldots,y,u-y)\, du \, dy\, dt
    %&=& \!\! \int_{t=0}^{a_1} \int_{y=0}^{t} \int_{u=0}^{y} \frac{t+2-a_1}{2} (u-y) y P_{g-1,n+1}(a_2,\ldots,y,u-y)\, dy \, du\, dt\\
\end{eqnarray*}
From the first line to the second, we have made the change of variables $t\mapsto t+2-a_1$ in all integrals,  then $y\mapsto t-y+1-a_1$ in the first integral and  $u\mapsto u+2-a_1$ in the third integral. From the second line to the second, we have changed variables in the first integral $u\mapsto u+y$. The odd part of this polynomial is given by
\begin{equation}\label{for:Dloopodd}
    \left(\mathcal{D}_{\{1,n+1\}}^{\rm loop}(a)\right)^{\rm odd} = \!\! \int_{t=-a_1}^{a_1}  \int_{u=0}^{t} \int_{y=0}^{u} \frac{(u-y) y}{2} P_{g-1,n+1}(a_2,\ldots,y,u-y)\, du \, dy\, dt,
\end{equation}
For $g_1+g_2=g$ and $I_1\sqcup I_2=\{2,\ldots,n-1\}$, we define and compute  $\mathcal{D}_{\{1,n+1\}}^{g_1,g_2,I_1,I_2}$ similarly to $\mathcal{D}_{\{1,n+1\}}^{\rm loop}(a)$, so we simply give the final expression of the odd part:
\begin{eqnarray}\label{for:Dsepodd}
    \left(\mathcal{D}_{\{1,n+1\}}^{g_1,g_2,I_1,I_2}(a)\right)^{\rm odd} &=& \!\! \int_{t=-a_1}^{a_1}  \int_{u=0}^{t} \int_{y=0}^{u} \frac{(u-y) y}{2} \nonumber \\
    &&\,\,\,\,\,\,\,\,\,\,\,\,\,\,\,\,\,\,\,\,\,\,\,\,
    \,\,\,\,\,\,\,\, P_{g_1,|I_1|+1}(u,\{a_i\}_{i\in I_1}) \times P_{g_2,|I_2|+1}(u-y,\{a_i\}_{i\in I_2})\, du \, dy\, dt.
\end{eqnarray}
With these functions, we can rewrite~\eqref{for:VP2odd} as
\begin{equation}
     \left(\int_{t=0}^2 \widetilde{\mathcal{D}}^{3}(a,t)\, dt\right)^{\rm odd}=\Bigg(\sum_{i=2}^n \mathcal{D}_{\{1,i,n+1\}}(a) +  \mathcal{D}_{\{1,n+1\}}^{\rm loop}(a) + \underset{ I_1\sqcup I_2 =\{2,\ldots,n\}}{\sum_{g_1+g_2=g}}  \mathcal{D}_{\{1,n+1\}}^{g_1,g_2,I_1,I_2}(a)  \Bigg)^{\rm odd}.
 \end{equation}
Putting~\eqref{for:dtildeodd},~\eqref{for:Diodd},~\eqref{for:Dloopodd}, and ~\eqref{for:Dsepodd} together we obtain the following relation
\begin{eqnarray}\label{eq:kdvint} \nonumber
&&\!\!\!\!\!\!\!\!\!\!\!\int_{t=1-a_1}^{1+a_1} (1+t)P_{g,n}(1+t,a_2,\ldots,a_n)  \\  & &= \sum_{1<i\leq n}  \int_{t=-a_1}^{a_1} \int_{u=0}^{a_i+t} uP_{g,n-1}(u,a_2, \ldots, \widehat{a_i},\ldots)\, dt \\ \nonumber
&&\,\,\,\,\,\,\,\,\, + \int_{t=-a_1}^{a_1} \int_{y=0}^u \frac{y (u-y)}{2} P_{g-1,n+1}( y,u-y,a_2,\ldots) \, dy\, dt\\ \nonumber
&&\,\,\,\,\,\,\,\,\, + \underset{ I_1\sqcup I_2 =\{2,\ldots,n\}}{\sum_{g_1+g_2=g}} \int_{y=0}^u \frac{y (u-y)}{2} P_{g_1,|I_1|+1}(y, \{a_i\}_{i\in I_1}) P_{g_2,|I_2|+1}(u-y, \{a_i\}_{i\in I_2}) \, dy\, dt.
\end{eqnarray}
Theorem~\ref{th:kdv} follows if we take the derivative of this relation with respect to $a_1$.

%\bibliographystyle{halpha}
%\bibliography{biblio}

\begin{thebibliography}{BCG{\etalchar{+}}19}

\bibitem[AM22]{AnaNor}
L.~Anagnostou and S.~Mullane.
\newblock Volumes of moduli spaces of hyperbolic surfaces with cone points.
\newblock 2022, arXiv:2212.13701.

\bibitem[AMP23]{AnaMulNor}
L.~Anagnostou, S.~Mullane, and Norbury P.
\newblock Weil-petersson volumes, stability conditions and wall-crossing.
\newblock 2023, arXiv:2310.13281.

\bibitem[BCG{\etalchar{+}}19]{BCGGM2}
M.~Bainbridge, D.~Chen, Q.~Gendron, S.~Grushevsky, and M.~M\"{o}ller.
\newblock Strata of {$k$}-differentials.
\newblock {\em Algebr. Geom.}, 6(2):196--233, 2019.

\bibitem[Bin05]{Bin}
G.~Bini.
\newblock Chern classes of the moduli stack of curves.
\newblock {\em Math. Res. Lett.}, 12(5-6):759--766, 2005.

\bibitem[CMS23]{CheMoeSau}
D.~Chen, M.~M{\"o}ller, and A.~Sauvaget.
\newblock Masur-{Veech} volumes and intersection theory: the principal strata
  of quadratic differentials.
\newblock {\em Duke Math. J.}, 172(9):1735--1779, 2023.

\bibitem[CMSZ20]{CheMoeSauZag}
D.~Chen, M.~M\"{o}ller, A.~Sauvaget, and D.~Zagier.
\newblock Masur-{V}eech volumes and intersection theory on moduli spaces of
  {A}belian differentials.
\newblock {\em Invent. Math.}, 222(1):283--373, 2020.

\bibitem[CMZ19]{CosMoeZac}
M.~Costantini, M.~M\"oller, and J.~Zachhuber.
\newblock The area is a good enough metric.
\newblock {\em arXiv:1910.14151}, 2019.

\bibitem[DN09]{DoNor}
N.~Do and P.~Norbury.
\newblock Weil-{Petersson} volumes and cone surfaces.
\newblock {\em Geom. Dedicata}, 141:93--107, 2009.

\bibitem[Du20]{Du}
Y.~Du.
\newblock A simple recursion for the mirzakhani volume and its super extension.
\newblock 2020, arXiv:2008.04458.

\bibitem[Gol84]{Gol}
W.~M. Goldman.
\newblock The symplectic nature of fundamental groups of surfaces.
\newblock {\em Adv. Math.}, 54:200--225, 1984.

\bibitem[GP03]{GraPan}
T.~Graber and R.~Pandharipande.
\newblock Constructions of nontautological classes on moduli spaces of curves.
\newblock {\em Michigan Math. J.}, 51(1):93--109, 2003.

\bibitem[Has03]{Has}
B.~Hassett.
\newblock Moduli spaces of weighted pointed stable curves.
\newblock {\em Adv. Math.}, 173(2):316--352, 2003.

\bibitem[Kon92]{Kon}
M. Kontsevich.
\newblock Intersection theory on the moduli space of curves and the matrix
  {A}iry function.
\newblock {\em Comm. Math. Phys.}, 147(1):1--23, 1992.

\bibitem[Mas82]{Mas}
H.~Masur.
\newblock Interval exchange transformations and measured foliations.
\newblock {\em Ann. of Math. (2)}, 115(1):169--200, 1982.

\bibitem[Mir07a]{Mir1}
M.~Mirzakhani.
\newblock Simple geodesics and {W}eil-{P}etersson volumes of moduli spaces of
  bordered {R}iemann surfaces.
\newblock {\em Invent. Math.}, 167(1):179--222, 2007.

\bibitem[Mir07b]{Mir}
M.~Mirzakhani.
\newblock Weil-{P}etersson volumes and intersection theory on the moduli space
  of curves.
\newblock {\em J. Amer. Math. Soc.}, 20(1):1--23, 2007.

\bibitem[MT21]{MaxTur}
H.~Maxfield and G.~J. Turiaci.
\newblock The path integral of 3d gravity near extremality; \emph{or}, {JT}
  gravity with defects as a matrix integral.
\newblock {\em J. High Energy Phys.}, 2021(1):56, 2021.
\newblock Id/No 118.

\bibitem[MZ07]{MaZha}
X.~Ma and W.~Zhang.
\newblock Superconnection and family {Bergman} kernels.
\newblock {\em C. R., Math., Acad. Sci. Paris}, 344(1):41--44, 2007.

\bibitem[Sau19]{Sau}
A.~Sauvaget.
\newblock Cohomology classes of strata of differentials.
\newblock {\em Geom. Topol.}, 23(3):1085--1171, 2019.

\bibitem[Sau20]{SauFlat}
A.~Sauvaget.
\newblock Volumes of moduli spaces of flat surfaces.
\newblock {\em arXiv:2004.03198}, 2020.

\bibitem[SSS19]{LiouvilleJT}
P.~Saad, S.~H. Shenker, and D.~Stanford.
\newblock Jt gravity as a matrix integral.
\newblock {\em arXiv:1903.11115}, 2019.

\bibitem[ST11]{SchTra}
G.~Schumacher and S.~Trapani.
\newblock Weil-{P}etersson geometry for families of hyperbolic conical
  {R}iemann surfaces.
\newblock {\em Michigan Math. J.}, 60(1):3--33, 2011.

\bibitem[Tro91]{Tro1}
M.~Troyanov.
\newblock Prescribing curvature on compact surfaces with conical singularities.
\newblock {\em Trans. Amer. Math. Soc.}, 324(2):793--821, 1991.

\bibitem[TWZ06]{TanWonZha}
S.~P. Tan, Y.~L. Wong, and Y.~Zhang.
\newblock Generalizations of {M}c{S}hane's identity to hyperbolic
  cone-surfaces.
\newblock {\em J. Differential Geom.}, 72(1):73--112, 2006.

\bibitem[Vee82]{Vee}
W.~A. Veech.
\newblock Gauss measures for transformations on the space of interval exchange
  maps.
\newblock {\em Ann. of Math. (2)}, 115(1):201--242, 1982.

\bibitem[Wit91]{Wit}
Edward Witten.
\newblock Two-dimensional gravity and intersection theory on moduli space.
\newblock In {\em Surveys in differential geometry ({C}ambridge, {MA}, 1990)},
  pages 243--310. Lehigh Univ., Bethlehem, PA, 1991.

\bibitem[Wit20]{WitJT}
E.~Witten.
\newblock Matrix models and deformations of {JT} gravity.
\newblock {\em Proc. R. Soc. Lond., A, Math. Phys. Eng. Sci.}, 476(2244):29,
  2020.
\newblock Id/No 20200582.

\end{thebibliography}

\newcommand{\etalchar}[1]{$^{#1}$}

\end{document}